\theoremstyle{plain}
\newtheorem{trm}{Theorem}[section]
\newtheorem{lm}[trm]{Lemma}
\newtheorem{prop}[trm]{Proposition}
\theoremstyle{definition}
\newtheorem{defi}[trm]{Definition}
\newtheorem{rmk}[trm]{Remark}
\begin{document}
	\title{Maximal gonality on strata of differentials and uniruledness of strata in low genus}
	\author{Andrei Bud}
	\address{Humboldt-Universit\"at zu Berlin, Institut f\"ur Mathematik, Rudower Chausee 25
		\hfill \newline\texttt{}
		\indent 12489 Berlin, Germany} 
	\email{{\tt andreibud95@protonmail.com}}
    \begin{abstract}
    	 We prove that for a generic element in a nonhyperelliptic component of an abelian stratum $\mathcal{H}_g(\mu)$ in genus $g$, the underlying curve has maximal gonality. We extend this result to the case of quadratic strata when the partition $\mu$ has positive entries. As a consequence we deduce that all nonhyperelliptic components of $\mathcal{H}_9(\mu)$ are uniruled when $\mu$ is a positive partition of 16 and all nonhyperelliptic components of $\mathcal{H}^2_g(\mu)$ are uniruled when $\mu$ is a positive partition of $4g-4$ and either $3\leq g\leq5$ or $g=6$ and $l(\mu)\geq 4$.
    \end{abstract}
\maketitle
\section{Introduction} 
For positive integers $g$ and $k$ and a partition $\mu = (m_1,\ldots, m_n)$ of $(2g-2)k$, the stratum
$\mathcal{H}_g^k(\mu)$ in $\mathcal{M}_{g,n}$ parametrizing $k$-canonical divisors of type $\mu$ is defined as 
\[\mathcal{H}_g^k(\mu) = \left\{ [C,x_1,\ldots, x_n] \in \mathcal{M}_{g,n} \ | \ \mathcal{O}_C(\sum_{i=1}^n m_ix_i) \cong \omega_C^k \right\} \]
In the case $k=1$ we drop the superscript in the notation. 

The problem of understanding the components of $\mathcal{H}^k_g(\mu)$ has received a lot of attention in recent years, see \cite{abelcompo}, \cite{Boissy}, \cite{Lanneau} and \cite{ChenM}. One important question is to understand the relative position of $\mathcal{H}^k_g(\mu)$ with respect to the gonality stratification of $\mathcal{M}_g$.   
Our result provides an answer to this question for the cases $k=1$ and $k=2$ when the partition $\mu$ has only positive entries. 

\begin{trm} \label{trmabel}
	Let $g \geq 3$, $n\geq 1$ and $\mu$ a length $n$ positive partition of $2g-2$. If $[C,x_1,\ldots, x_n]$ is a generic point of a nonhyperelliptic component of $\mathcal{H}_g(\mu)$, then the curve $C$ has maximal gonality.  
\end{trm}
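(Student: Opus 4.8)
The plan is to argue by a dimension count on an incidence variety, exploiting that ``gonality at most $k$'' is a closed condition on $\mathcal{M}_g$. Write $k_{\max}=\lfloor (g+3)/2\rfloor$ for the maximal (generic) gonality in genus $g$ and set $k_0=k_{\max}-1$. Since every connected component of $\mathcal{H}_g(\mu)$ has the same dimension $2g+n-2$, it suffices to show that the sublocus of the stratum consisting of curves of gonality at most $k_0$ has dimension strictly smaller than $2g+n-2$; then no component can be contained in it, and the generic curve of every component has gonality exactly $k_{\max}$.

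To this end I would introduce the incidence variety
\[
\mathcal{I}=\left\{\,(C,x_1,\ldots,x_n,L)\ :\ \mathcal{O}_C(\textstyle\sum_i m_ix_i)\cong\omega_C,\ L\in G^1_{k_0}(C)\,\right\},
\]
with its projection $p$ to $\mathcal{M}_{g,n}$, whose image is exactly $\mathcal{H}_g(\mu)\cap\{\mathrm{gon}\le k_0\}$, and its projection $q$ to the space $\mathcal{T}_{k_0}$ of pairs $(C,L)$ with $L$ a base-point-free pencil of degree $k_0$. The space $\mathcal{T}_{k_0}$ is dominated by the Hurwitz space of simply branched degree-$k_0$ genus-$g$ covers of $\mathbb{P}^1$, which has $2g-2+2k_0$ branch points, so $\dim\mathcal{T}_{k_0}=2g+2k_0-5$ after quotienting by $\mathrm{PGL}_2$. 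Over a pair $(C,L)$ the fibre of $q$ is the set of configurations $x_\bullet\in C^n$ with $\sum m_ix_i\in|K_C|$, i.e.\ the fibre over $[\omega_C]$ of the Abel--Jacobi-type map $\Phi\colon C^n\to\mathrm{Pic}^{2g-2}(C)$, $x_\bullet\mapsto\mathcal{O}_C(\sum m_ix_i)$. For generic $(C,L)$ this fibre has the expected dimension $n-g+1$ (the image of $C^n$ in $C^{(2g-2)}$ has dimension $n$, cut against $|K_C|\cong\mathbb{P}^{g-1}$ inside $C^{(2g-2)}$), whence
\[
\dim\mathcal{I}\le(2g+2k_0-5)+(n-g+1)=n+g+2k_0-4<2g+n-2,
\]
the last inequality being equivalent to $k_0<(g+2)/2$, which holds for both parities of $g$. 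This is the heart of the count, and it already yields the conclusion with codimension to spare.

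The main obstacle is that the estimate $n-g+1$ is only the \emph{expected} fibre dimension, valid over a generic pair $(C,L)$; over special pairs the fibre of $\Phi$ over the distinguished class $[\omega_C]$ can jump, and a priori the excess could let a whole component of $\mathcal{H}_g(\mu)$ sit inside the gonality-$\le k_0$ locus (this regime genuinely matters for small $n$, e.g.\ the minimal stratum $n=1$, where the image in $\mathcal{M}_g$ is far from dominant). I would control this by stratifying $\mathcal{T}_{k_0}$ and bounding the fibre of $\Phi$ over $[\omega_C]$ through the geometry of special divisors on a curve of gonality $k_0$: the canonical configurations $\sum m_ix_i$ are governed by $|K_C|=\mathbb{P}^{g-1}$, and imposing a zero of order $m_i$ at $x_i$ cuts this system down, so a Clifford-type bound (via the base-point-free pencil trick applied to $L$) limits how large the fibre can grow. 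The jump is largest precisely on the hyperelliptic locus, where the canonical map factors through the $g^1_2$ and the configurations degenerate; this is exactly why the hyperelliptic components fail the conclusion and must be excluded, and the stratification must isolate and discard that locus.

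Assembling the pieces: away from the hyperelliptic locus the corrected fibre bound keeps $\dim\mathcal{I}<2g+n-2$, so $\dim p(\mathcal{I})<\dim\mathcal{H}_g(\mu)$ and no nonhyperelliptic component is swept out by curves of gonality $\le k_0$. As a cross-check and a complementary route, one can instead invoke lower semicontinuity of gonality in families: it then suffices to exhibit a single point of each nonhyperelliptic component whose underlying curve attains gonality $k_{\max}$, which I would produce by degenerating to the boundary of the compactified stratum and gluing maximal-gonality pieces of lower genus, while tracking the spin/parity invariants so as to land in the prescribed Kontsevich--Zorich component.
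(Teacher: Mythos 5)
Your central dimension count has the inequality backwards, and this is a genuine gap rather than a technicality. The estimate $\dim\mathcal{I}\le(\dim\mathcal{T}_{k_0})+(n-g+1)$ treats the expected fibre dimension of $\Phi$ over $[\omega_C]$ as an upper bound, but expected-dimension arguments for intersections in a smooth ambient space give \emph{lower} bounds on non-empty components, never upper bounds. The failure is not confined to exotic special loci: for the minimal stratum $\mu=(2g-2)$ the expected fibre dimension is $2-g<0$, yet the stratum is non-empty of dimension $2g-1$; the fibres of $\Phi$ there are finite, and the whole question becomes bounding the dimension of the locus of pairs $(C,L)\in\mathcal{T}_{k_0}$ for which $C$ carries a subcanonical point. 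Bounding that locus away from dimension $2g-1$ \emph{is} the theorem for $n=1$, so the count assumes what it must prove. You acknowledge the jump phenomenon, but the proposed repair (stratifying $\mathcal{T}_{k_0}$ and invoking a Clifford-type bound via the base-point-free pencil trick) is never executed, and it is not at all clear it can be: the statement ``a curve of gonality $k_0$ cannot carry a canonical divisor of type $\mu$'' is not a consequence of Clifford-type inequalities (Clifford bounds $h^0$ of special divisors, it says nothing about whether a specific point of $\mathrm{Pic}^{2g-2}$, namely $[\omega_C]$, is representable by a divisor of shape $\sum m_ix_i$ on a low-gonality curve). So the ``heart of the count'' is circular.

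Your fallback route in the last paragraph is in fact the paper's strategy, but as sketched it misses the step that makes it work. The paper first reduces to $\mu=(2g-2)$ by attaching a rational tail carrying the markings and invoking Chen's description of the closure of strata (including how the spin parity behaves under this clutching), and then proves the $n=1$ case by induction on the genus, degenerating to a curve of lower genus glued to an elliptic tail and analyzing admissible covers. The crucial point is that the inductive hypothesis must be \emph{strictly stronger} than maximal gonality: one has to rule out pencils $g^1_{i+j}$ with prescribed high ramification at the marked point (Propositions 2.3, 2.4 and 3.1 of the paper), together with a torsion condition $(2k-1)y\neq(2k-1)z$ on the elliptic tail. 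Simply ``gluing maximal-gonality pieces of lower genus'' does not suffice, because an elliptic tail has gonality $2$ and a nodal curve whose components each have maximal gonality can still be a limit of smooth curves of low gonality; what blocks such limits is precisely the ramification data of the admissible cover at the node, which is why gonality alone does not propagate through the induction. Without that strengthened statement your degeneration argument does not close, so both routes in the proposal have essential gaps.
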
 
We have a similar result for the quadratic case. 
\begin{trm} \label{trmquad}
	Let $g \geq 3$, $n\geq 1$ and $\mu$ a length $n$ positive partition of $4g-4$. If $[C,x_1,\ldots, x_n]$ is a generic point of a nonhyperelliptic component of $\mathcal{H}^2_g(\mu)$, then the curve $C$ has maximal gonality.  
\end{trm}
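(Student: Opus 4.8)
The plan is to reduce the assertion to the construction of a single witness curve per component, and then to produce that witness by a boundary degeneration of the stratum.

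\emph{Reduction to one witness.} Write $\gamma_{\max}=\lceil(g+2)/2\rceil$ for the gonality of a general genus-$g$ curve. Gonality drops only on closed subloci (lower gonality is a specialisation-closed condition), so $\{\gamma(C)=\gamma_{\max}\}=\{\gamma(C)\ge\gamma_{\max}\}$ is open on the universal curve. Hence on any irreducible component $Z$ of $\mathcal{H}^2_g(\mu)$ the locus of $[C,x_1,\dots,x_n]$ with $C$ of maximal gonality is open, and therefore dense as soon as it is nonempty. It thus suffices to exhibit, for each non-hyperelliptic component $Z$, one point of $\overline Z\subseteq\overline{\mathcal M}_{g,n}$ whose underlying curve is smooth of maximal gonality. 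Before doing so I would dispose of the easy case: if the forgetful map $\pi\colon Z\to\mathcal M_g$ is dominant, then the generic point of $Z$ lies over the generic point of $\mathcal M_g$, whose gonality is $\gamma_{\max}$, and we are done. Since $\dim Z=2g-2+n$, this covers all strata with $n\ge g-1$; the content of the theorem is concentrated in the deep strata with few high-multiplicity zeros, most acutely the minimal stratum $\mu=(4g-4)$.

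\emph{The witness by degeneration.} For the remaining strata I would pass to the boundary of $\overline{\mathcal H^2_g(\mu)}$, whose points are governed by twisted and multi-scale quadratic differentials in the sense of Bainbridge--Chen--Gendron--Grushevsky--M\"oller, and choose a degeneration to a stable pointed curve $X_0$ built from lower-genus pieces carrying compatible twisted differentials (quadratic, or abelian on suitable components). The pieces are arranged so that $X_0$ lies in the closure of the prescribed component and so that any putative $g^1_\gamma$ with $\gamma<\gamma_{\max}$ on a general smoothing would specialise, through the theory of limit linear series / admissible covers, to a compatible collection of pencils on the components of $X_0$. Restricting such a collection to a carefully chosen building block then forces a pencil of too small degree there, contradicting either the inductive hypothesis on genus or the abelian statement of Theorem \ref{trmabel} applied to that block. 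Peeling off one elliptic or low-genus component at a time, this induction yields the required smooth curve of maximal gonality.

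\emph{Components and low genus.} To make the construction unconditional I would invoke the classification of connected components of quadratic strata (Lanneau; Chen--M\"oller): it identifies the hyperelliptic components, which the statement excludes, and lets me realise each remaining component by an explicit twisted differential matching its distinguishing invariant, so that the degeneration above is guaranteed to stay inside $Z$. The sporadic low-genus components and the smallest strata, where the classification is irregular and $\gamma_{\max}$ is so small that only one or two pencil degrees must be excluded, I would handle directly by hand.

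The main obstacle is the control of limit linear series on the degenerate witness: proving that $X_0$ carries no limit $g^1_\gamma$ requires a Brill--Noether-type transversality on each component together with a compatibility analysis of the vanishing sequences at the nodes, and this must be arranged simultaneously with the requirement that $X_0$ realise the correct component invariant. A secondary but delicate difficulty is ensuring that the degeneration does not slip into the hyperelliptic (or some other unintended) component; tracking the spin and hyperelliptic invariants of the twisted differential throughout the degeneration is the bookkeeping on which the whole argument rests.
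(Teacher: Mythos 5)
Your strategy coincides in outline with the paper's: maximal gonality is an open condition, so one witness per component suffices, and the witness is produced by degenerating to a curve with elliptic (or low-genus) tails and ruling out limit pencils via admissible covers, inducting on genus with the abelian case as input. But there is a genuine gap in the induction as you set it up: \emph{maximal gonality does not propagate through the degeneration by itself}, and the contradiction you hope to reach ``with the inductive hypothesis on genus or with Theorem \ref{trmabel}'' is simply not available at every step. Concretely, in the step from genus $2i$ to genus $2i+1$ the gonality to be excluded is $i+2$; a putative limit of a $g^1_{i+1}$ on a smoothing of $C\cup_x E$ (with $C$ of genus $2i$ and $E$ elliptic) restricts on $C$ to a pencil of degree at most $i+1$, which is exactly the generic gonality of a genus-$2i$ curve, so neither maximal gonality of $C$ nor Theorem \ref{trmabel} is contradicted. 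What makes the paper's induction close is a strictly stronger statement, proved inductively with parity-dependent numerology (Propositions \ref{subcanonical} and \ref{quadsubcan}): the generic $[C,x]$ of the relevant stratum admits no $g^1_{i+j}$ with ramification order at the \emph{marked point} at least $2j$ (resp.\ $2j-1$), for a whole range of $j$. These ramification bounds at the attachment node, transported through the admissible cover by Lemma \ref{admcyc} and packaged in Propositions \ref{eventoodd} and \ref{oddtoeven}, are precisely what kill the degree-$(i+1)$ case. You flag the ``compatibility analysis of the vanishing sequences at the nodes'' as the main obstacle but leave it unresolved; formulating and proving this refined statement is the core of the proof, not a technical afterthought, so the proposal as written does not close.

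Two further points. First, you must guarantee that the degenerate fibre lies in the closure of the intended quadratic component; the paper does this concretely, e.g.\ in the even-to-odd step it clutches $\mathcal{H}^{\mathrm{nonhyp}}_{2i}(4i-2)$ to the subspace $\mathcal{R}^2_1(8i,-8i)$ of elliptic quadratic differentials cut out by a vanishing second-power residue (nonempty by Gendron--Tahar), as dictated by Chen's description of the compactification; ``tracking the spin and hyperelliptic invariants'' is not a substitute for such a construction. Second, your ``easy case'' is off: a nonabelian quadratic stratum has dimension $2g-3+n$ as a sublocus of $\mathcal{M}_{g,n}$, not $2g-2+n$, and dominance of the forgetful map does not follow from a dimension count alone. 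The paper sidesteps this entirely by reducing \emph{every} positive $\mu$ to the minimal stratum $(4g-4)$ via attaching a rational tail (again using Chen's compactification), which handles all lengths uniformly. Your by-hand treatment of the sporadic cases does match the paper, which settles genus $3$ with an explicit plane quartic and $\mathcal{Q}_4(12)$ via Chen--M\"oller.
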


It is to be expected that Theorems \ref{trmabel} and \ref{trmquad} may be instrumental in computing the Kodaira dimension of strata of $k$-canonical divisors. In this respect the divisor in $\mathcal{M}_g$ parametrizing curves $[C]$ having less than maximal gonality was used in \cite{KodMg} to compute the Kodaira dimension of $\mathcal{M}_g$ for odd $g \geq 25$. The pull-back of this divisor was used in \cite{FarLud} to show that the moduli $\mathcal{R}_g$ of Prym varieties of dimension $g-1$ has maximal Kodaira dimension when $g\geq 17$.

Theorem \ref{trmabel} is used to compute the Kodaira dimension of strata in low genus. We recall the method used in \cite{BAR18} and based on an idea that first appeared in \cite{FarVer}.
If $[C,x_1,\ldots,x_n]$ is a generic point of $\mathcal{H}_9(\mu)$ and $C$ is embedded as a hyperplane section of a K3 surface $S$ of genus $g$ in $\mathbb{P}^g$ then there exists a $g-2$ dimensional linear subspace $\Lambda_{\mu}$ of $\mathbb{P}^g$ satisfying 
\[ \Lambda_{\mu}\cdot S = \sum_{i=1}^n m_ix_i \]
By intersecting the pencil of hyperplanes containing $\Lambda_{\mu}$ with the K3 surface $S$ we get a rational curve 
\[\mathbb{P}^1 \rightarrow \overline{\mathcal{H}}_9(\mu)\]
passing through $[C,x_1,\ldots, x_n]$, hence implying uniruledness. 

In genus 9 the restriction $l(\mu)\geq 7$ on the length of the partition in \cite{BAR18} appears as it was not known whether a generic point of $[C,x_1,\ldots, x_n]$ in the stratum $\mathcal{H}_9(\mu)$ admits such an embedding in a K3 surface when $l(\mu)\leq 6$. Mukai proved that each curve of genus 9 of maximal gonality can be realized as a hyperplane section of a K3 surface, see \cite[Theorem A]{MukK3}. As a consequence we get the following result:  

\begin{trm} \label{unirul}
	Let $\mu$ a positive partition of 16. Then all nonhyperelliptic components of $\mathcal{H}_9(\mu)$ are uniruled. 
\end{trm}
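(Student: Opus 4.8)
The plan is to exhibit, through a generic point of each nonhyperelliptic component, a nonconstant rational curve contained in $\overline{\mathcal{H}}_9(\mu)$, using the pencil construction recalled in the introduction. The only input that construction requires is the realization of the generic curve as a hyperplane section of a K3 surface, and this is precisely where Theorem \ref{trmabel} and Mukai's result combine. Fix a nonhyperelliptic component $Z$ of $\mathcal{H}_9(\mu)$ and a generic point $[C,x_1,\ldots,x_n]\in Z$. By Theorem \ref{trmabel} the underlying curve $C$ has maximal gonality, which for $g=9$ equals $\lfloor (g+3)/2\rfloor = 6$. Since $C$ is nonhyperelliptic of genus $9$ and of maximal gonality, Mukai's theorem (\cite[Theorem A]{MukK3}) guarantees that $C$ is a hyperplane section of a K3 surface $S\subset \mathbb{P}^9$; by adjunction the hyperplane class restricts to $\omega_C$, so this realization embeds $C$ canonically into the hyperplane $H_0=\langle C\rangle\cong\mathbb{P}^8$.

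Next I would produce the linear subspace $\Lambda_\mu$. Because $[C,x_1,\ldots,x_n]$ lies in $\mathcal{H}_9(\mu)$, the divisor $\sum_i m_i x_i$ is canonical, hence cut out on the canonically embedded $C\subset H_0$ by a hyperplane of $H_0$; this hyperplane is a $\mathbb{P}^7=\Lambda_\mu\subset\mathbb{P}^9$ satisfying $\Lambda_\mu\cdot S=\sum_i m_i x_i$. The pencil of hyperplanes of $\mathbb{P}^9$ containing $\Lambda_\mu$ then yields a one-parameter family $\{C_t=H_t\cap S\}$ of genus-$9$ curves. For each $t$ the divisor $\sum_i m_i x_i=\Lambda_\mu\cap C_t$ is cut out on the canonical curve $C_t\subset H_t$ by the hyperplane $\Lambda_\mu$ of $H_t$, hence is a canonical divisor, so $\mathcal{O}_{C_t}(\sum_i m_i x_i)\cong\omega_{C_t}$. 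This gives $[C_t,x_1,\ldots,x_n]\in\overline{\mathcal{H}}_9(\mu)$ for all $t$ and therefore a morphism $\mathbb{P}^1\to\overline{\mathcal{H}}_9(\mu)$ passing through the chosen point at $t_0$ with $H_{t_0}=H_0$.

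Finally I would verify that this morphism is nonconstant, so that its image is a genuine rational curve rather than a point: the general hyperplane section $C_t$ of $S$ varies in moduli, hence the induced map to $\overline{\mathcal{M}}_{9,n}$ is nonconstant. Since $[C,x_1,\ldots,x_n]$ was an arbitrary generic point of $Z$, these rational curves sweep out a dense subset of $Z$, and I would conclude that every nonhyperelliptic component of $\mathcal{H}_9(\mu)$ is uniruled.

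The essential obstacle is the existence of the K3 embedding for \emph{every} partition length, in particular the range $l(\mu)\leq 6$ that was inaccessible in \cite{BAR18}; this is removed exactly by invoking Mukai's theorem, which applies to all maximal-gonality genus-$9$ curves and thus bypasses the genericity-in-a-K3 hypothesis that previously forced the restriction $l(\mu)\geq 7$. The remaining verifications, namely that $\Lambda_\mu$ has the asserted dimension, that $\Lambda_\mu\cdot S=\sum_i m_i x_i$, and that the pencil genuinely moves the moduli point, are the routine linear-algebra and dimension checks implicit in the construction.
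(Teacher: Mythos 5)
Your proposal is correct and takes essentially the same approach as the paper: Theorem \ref{trmabel} gives maximal gonality of the underlying curve, Mukai's theorem \cite[Theorem A]{MukK3} then provides the K3 embedding for every partition length (removing the $l(\mu)\geq 7$ restriction), and the pencil of hyperplanes through $\Lambda_\mu$ yields the rational curve through the generic point --- which is exactly the argument the paper compresses into citing \cite[Remark 1.4]{BAR18}. You have simply written out the details of the pencil construction that the paper leaves to that reference.
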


A similar approach works for quadratic strata $\mathcal{H}^2_g(\mu)$ when $3\leq g\leq 6$. Assume that for a generic point $[C,x_1,\ldots,x_n]$ of a component of $\mathcal{H}^2_g(\mu)$ the curve $C$ is a hyperplane section in a del Pezzo surface $S$ embedded in $\mathbb{P}^{3g-3}$ by the linear system $|-2K_S|$. Because the restriction of $|-2K_S|$ to $C$ is $|2K_C|$, the same method applies to this case. Due to Theorem \ref{trmquad} we can drop some of the extra conditions in \cite{BAR18} and show: 

\begin{trm} \label{delPezzo}
	Let $3\leq g\leq 5$ and $\mu$ a positive partition of $4g-4$. Then all nonhyperelliptic components of the quadratic stratum $\mathcal{H}^2_g(\mu)$ are uniruled. If $g=6$ and $\mu$ is a positive partition of 20 of length $l(\mu)\geq 4$, then all nonhyperelliptic components of $\mathcal{H}^2_g(\mu)$ are uniruled.
\end{trm}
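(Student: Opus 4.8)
The plan is to mimic the K3 surface construction sketched in the introduction for Theorem \ref{unirul}, but with del Pezzo surfaces playing the role of K3 surfaces. The key geometric input is that a smooth del Pezzo surface $S$ of degree $d$, anticanonically embedded and then re-embedded by $|-2K_S|$, lands in $\mathbb{P}^{3d-3}$, and a general hyperplane section $C$ of this embedding is a smooth curve whose genus matches the $g$ in our statement. Crucially, the restriction $|-2K_S|\big|_C = |2K_C|$ by adjunction, so a hyperplane in $\mathbb{P}^{3g-3}$ cutting $S$ in a divisor supported on $C$ produces a bicanonical divisor on $C$ — exactly an element of a quadratic stratum. First I would verify the numerology: for each $g$ with $3\le g\le 6$ I need $\deg S = g-1$ so that the bicanonical embedding of $S$ has the correct dimension $3g-3$, and I would record that such $S$ exist (del Pezzo surfaces of degrees $2,3,4,5$ for $g=3,4,5,6$, i.e.\ blow-ups of $\mathbb{P}^2$ in $7,6,5,4$ general points, or $\mathbb{P}^1\times\mathbb{P}^1$).

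Next I would set up the rational-curve construction. Given a generic point $[C,x_1,\ldots,x_n]$ of a nonhyperelliptic component of $\mathcal{H}^2_g(\mu)$, Theorem \ref{trmquad} guarantees $C$ has maximal gonality, and I would invoke the appropriate Mukai-type or Brill--Noether result asserting that such a $C$ arises as a hyperplane section of a suitable del Pezzo surface $S\subset\mathbb{P}^{3g-3}$ under $|-2K_S|$. Because $\mathcal{O}_C(\sum m_i x_i)\cong\omega_C^2$ and $\omega_C^2$ is the restriction of $-2K_S$, there is a hyperplane $\Lambda_\mu\subset\mathbb{P}^{3g-3}$ whose intersection with $C$ is precisely the divisor $\sum_{i=1}^n m_i x_i$; equivalently, a codimension-two linear space meets $S$ along $C$ in that divisor. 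Varying the pencil of hyperplanes through the residual linear space sweeps out a family of curves in $|{-}2K_S|$ each carrying the marked divisor of type $\mu$, giving a nonconstant map $\mathbb{P}^1\to\overline{\mathcal{H}}^2_g(\mu)$ through $[C,x_1,\ldots,x_n]$. Producing such a rational curve through a generic point is precisely the definition of uniruledness, so this finishes the cases where the K3-style argument transfers cleanly.

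The main obstacle I expect is exactly the hypothesis that the generic curve in each component \emph{does} admit the del Pezzo hyperplane-section realization — the analogue of Mukai's Theorem A for K3's. Maximal gonality (from Theorem \ref{trmquad}) is necessary but I must confirm it is sufficient in these low genera; this is where the case distinction $g=6$, $l(\mu)\ge 4$ enters, since for $g=6$ the del Pezzo surface is a quintic del Pezzo and the realizability of a general maximal-gonality genus-$6$ curve as its hyperplane section, together with enough freedom to prescribe the marked points, likely fails without the extra length condition. Concretely, the length bound should ensure that the linear space $\Lambda_\mu$ can be chosen to cut out $\sum m_i x_i$ while still moving in a positive-dimensional pencil; when $l(\mu)$ is too small the marked divisor is too concentrated and the construction degenerates. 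I would therefore carefully analyze, for $g=6$, the dimension count comparing $\dim|{-}2K_S|$, the dimension of the stratum, and the constraints imposed by the partition $\mu$, isolating $l(\mu)\ge 4$ as the precise threshold guaranteeing a nonconstant rational curve. For $3\le g\le 5$ the surfaces are more flexible (more blown-up points, larger automorphism and moduli freedom) and no such restriction is needed.
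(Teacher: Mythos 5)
Your high-level strategy---replacing Mukai's K3 theorem with a del Pezzo realization and running the pencil-of-hyperplanes construction from the introduction---is indeed the paper's strategy, but your proposal leaves the actual mathematical content as a black box and misidentifies the role of the hypothesis $l(\mu)\geq 4$. The step you defer to ``the appropriate Mukai-type or Brill--Noether result'' is precisely what the paper must prove, since no such off-the-shelf statement exists for del Pezzo surfaces: the paper shows that every maximal-gonality curve lies in the image of the forgetful map $\psi_{10-g}\colon \mathcal{B}_{10-g}\to\mathcal{M}_g$ by separate constructions in each genus. For $g=3,4$ one embeds $C$ in $\mathbb{P}^3$ by a very ample line bundle $A$ of degree $6$ with $h^0(C,A)=4$ (namely $\omega_C(x+y)$, resp.\ $\omega_C$), projects to a nodal plane sextic, and invokes Proposition 2.1 of Barros' paper to realize such curves on del Pezzo surfaces; for $g=5$ one uses that the canonical model of a maximal-gonality curve is a complete intersection of three quadrics in $\mathbb{P}^4$ and chooses two of them whose intersection is a smooth surface $S$, so that adjunction gives $\omega_S\cong\mathcal{O}_S(-1)$ and $C\in|-2K_S|$; for $g=6$ one uses Mukai's description of $C$ as $G(2,5)\cap H_1\cap\cdots\cap H_4\cap Q$ in $\mathbb{P}^9$, which is valid only when $C$ is neither bi-elliptic nor a plane quintic. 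None of this appears in your proposal, and it cannot be waved through: maximal gonality alone is \emph{not} a sufficient hypothesis in genus $6$.

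This last point is exactly where your explanation of $l(\mu)\geq 4$ goes wrong. The condition has nothing to do with the pencil through $\Lambda_\mu$ degenerating or the marked divisor being ``too concentrated''; once the del Pezzo realization exists, the pencil construction works identically in all cases. The condition is needed because Mukai's genus-$6$ classification excludes bi-elliptic curves and plane quintics, so one must verify that the generic curve in the image of $\pi_\mu\colon\mathcal{Q}_6(\mu)\to\mathcal{M}_6$ avoids these loci. The paper does this by a dimension count: the image of $\pi_\mu$ has dimension $\min\{9+l(\mu),15\}$, while smooth plane quintics form a $12$-dimensional locus and bi-elliptic curves a $10$-dimensional one; $l(\mu)\geq 4$ gives dimension at least $13>12$, which is the precise threshold. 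There is also a numerical slip in your setup: a del Pezzo surface of degree $d$ is embedded by $|-2K_S|$ in $\mathbb{P}^{3d}$, not $\mathbb{P}^{3d-3}$; with $d=g-1$ this is the paper's $\mathbb{P}^{3g-3}$, so your requirement $\deg S=g-1$ is correct but your dimension formula contradicts it.
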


In what follows, we will reduce Theorems \ref{trmabel} and \ref{trmquad} to the case when the partition $\mu$ has length one and then use a degeneration argument to prove this case. 
For this, we will need the compactification of $\mathcal{H}_g^k(\mu)$ inside $\overline{\mathcal{M}}_{g,n}$. Results in this direction appear in \cite{Gen18}, \cite{FP18} and \cite{Sch18} and a complete description was provided in \cite[Corollary 1.4]{DaweiAbComp} and \cite[Theorem 1.5]{Daweik-diffcomp}. Another important ingredient is the description of the irreducible components of $\mathcal{H}_g^k(\mu)$. For $k=1, g\geq 4$, if $\mu$ is positive with only even entries, the space $\mathcal{H}_g(\mu)$ has two nonhyperelliptic components depending on the parity of the spin structure and has one if $\mu$ has at least one odd entry, as seen in \cite[Theorem 1]{abelcompo}. For $k=2$ and $\mu$ positive, except for some sporadic strata in genus 4, the space $\mathcal{H}^2_g(\mu)$ has a unique irreducible component denoted $\mathcal{Q}_g(\mu)$ that is nonhyperelliptic and not an abelian component. This description appears in \cite[Theorem 1.1 and Theorem 1.2]{Lanneau}  and is completed in \cite[Theorem 1.2]{ChenM}.

The description of the parity of the spin structure appearing in \cite[Example 6.1]{corn} is also needed, but as the task of determining the parity will be trivial, this will be tacitly used. As a matter of convention, we say that a partition $\mu$ is positive when all its entries are positive.  
 
\textbf{Acknowledgements} I would like to thank my advisor Gavril Farkas for suggesting this beautiful topic. I am grateful to Johannes Schmitt for highlighting to me the importance of admissible covers and their applications. I am also grateful to the anonymous referee for carefully reading the paper and the many suggested improvements. I was financially supported by the Berlin Mathematical School, within the framework of the Cluster of Excellence.

	\section{Properties of admissible covers}  \label{sec1}

	In what follows we will use the compactification of the Hurwitz scheme with admissible covers, appearing in \cite{KodMg} and generalized in \cite{Diaz} to prove some general properties for them. Afterwards, we particularize these results to strata of $k$-canonical divisors. We start with the following definition 
	\begin{defi} \normalfont Let $f\colon Y \rightarrow \Gamma$ an admissible cover with $Y$ a curve of compact type and $\Gamma$ a curve of genus $0$. Let $y$ be a node of $Y$ joining two irreducible components $Y_1$ and $Y_2$. Denote by $\mathbb{P}_1$ and $\mathbb{P}_2$ the images of $Y_1$ and $Y_2$ through $f$. If $\mathbb{P}$ is a component of $\Gamma$, we say that $\mathbb{P}$ is in the same direction as $y$ with respect to $Y_1$ if there exist a chain $\mathbb{P}_1,\mathbb{P}_2,\ldots, \mathbb{P}_n = \mathbb{P}$ of irreducible components of $\Gamma$ such that $\mathbb{P}_i$ and $\mathbb{P}_{i+1}$ share a node for every $1\leq i\leq n-1$ and $\mathbb{P}_i \neq \mathbb{P}_j$ for any pair of indices $i\neq j$. 
	\end{defi}
	
	In order to understand the numerical conditions respected by the admissible cover, we provide a similar definition for the components of $Y$. 
	\begin{defi} \normalfont In the notations above, we define the subcurve $Y_{y;Y_1}$ of $Y$ to be the union of irreducible components $Z$ for which we have a chain of irreducible components $Y_1,Y_2,\ldots, Y_n = Z$ without repetitions such that every component share a node with the previous one in the chain. In other words, $Y_{y;Y_1}$ is the maximal connected subcurve of $Y$ satisfying $Y_1 \cap Y_{y;Y_1} = \left\{ y\right\}$.	
	\end{defi} 

    In the above notations we have the following 
    \begin{lm}\label{admcyc} Let $\mathbb{P}$ be a component of $\Gamma$ and denote by $c$ and $c_1$ the degree of $f_{|Y_{y;Y_1}}$ over $\mathbb{P}$ and $\mathbb{P}_1$ respectively. Then, if $\mathbb{P}$ is in the same direction as $y$ with respect to $Y_1$ we have the equality $c = c_1 + \mathrm{ord}_y(f_{|Y_1})$. Otherwise we have $c= c_1$. 
    \end{lm}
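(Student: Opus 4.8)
The plan is to regard the degree $d(\mathbb{P})$ of $f_{|Y_{y;Y_1}}$ over a component $\mathbb{P}$ of $\Gamma$ as a function of $\mathbb{P}$ (so that $c = d(\mathbb{P})$ and $c_1 = d(\mathbb{P}_1)$ in the notation of the statement) and to show that it is constant across every node of $\Gamma$ except the image $p_0 := f(y)$, across which it jumps by $\mathrm{ord}_y(f_{|Y_1})$. Since $\Gamma$ has genus $0$ and $Y$ is of compact type, the dual graphs of both curves are trees. Deleting from the dual tree of $Y$ the edge corresponding to $y$ disconnects it into two subtrees, and by definition $Y_{y;Y_1}$ is the one containing $Y_2$. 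The observation on which everything rests is that $y$ is the \emph{unique} node of $Y$ having exactly one of its two branches inside $Y_{y;Y_1}$: every other node lies in the interior of one of these two subtrees, so both of its branches are in $Y_{y;Y_1}$ or both in its complement.

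First I would establish a local balancing formula for $d(\mathbb{P})$. The components of $Y_{y;Y_1}$ mapping onto a fixed $\mathbb{P}$ form a curve that $f$ sends finitely onto $\mathbb{P}\cong\mathbb{P}^1$ with degree $d(\mathbb{P})$, so the number of preimages of any point of $\mathbb{P}$, counted with multiplicity, is $d(\mathbb{P})$. Taking this point to be a node $p$ of $\Gamma$ lying on $\mathbb{P}$, and using that in an admissible cover every preimage of a node is a node of $Y$ at which the two branches carry equal ramification, I obtain that $d(\mathbb{P})$ equals the sum of the orders $\mathrm{ord}_q(f)$ over those nodes $q$ of $Y$ above $p$ whose $\mathbb{P}$-side branch lies in $Y_{y;Y_1}$.

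Next I would compare $d(\mathbb{P})$ and $d(\mathbb{P}')$ for two components $\mathbb{P},\mathbb{P}'$ of $\Gamma$ meeting at a node $p$. Writing each as the ramification sum furnished by the balancing formula at $p$, their difference is the signed sum of $\mathrm{ord}_q(f)$ over the nodes $q$ above $p$ that are \emph{split} by $Y_{y;Y_1}$, i.e.\ have one branch in and one branch out. By the observation above the only such node is $y$, and it lies above $p$ exactly when $p = p_0$. Hence $d$ is constant across every node of $\Gamma$ other than $p_0$; and across $p_0$, which joins $\mathbb{P}_1$ and $\mathbb{P}_2$, the node $y$ contributes with its $Y_2$-branch inside $Y_{y;Y_1}$ and its $Y_1$-branch outside, so that $d(\mathbb{P}_2) = d(\mathbb{P}_1) + \mathrm{ord}_y(f_{|Y_1})$.

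Finally I would read off the conclusion from the tree structure of $\Gamma$: deleting the edge $p_0$ splits its dual tree into the part containing $\mathbb{P}_1$ and the part containing $\mathbb{P}_2$, and $\mathbb{P}$ is in the same direction as $y$ with respect to $Y_1$ precisely when it lies in the latter. Since $d$ is constant on each part and increases by $\mathrm{ord}_y(f_{|Y_1})$ when crossing $p_0$, it equals $c_1$ for every $\mathbb{P}$ on $\mathbb{P}_1$'s side and $c_1 + \mathrm{ord}_y(f_{|Y_1})$ for every $\mathbb{P}$ on $\mathbb{P}_2$'s side, as asserted. The main obstacle I anticipate is making the local balancing formula precise — in particular justifying that the restricted map has a genuine degree computed by the ramification of the branches lying in $Y_{y;Y_1}$, and that matching ramification at nodes allows one to pass from one side to the other — after which the global statement is a purely combinatorial consequence of $\Gamma$ being a tree in which $y$ is the only split node.
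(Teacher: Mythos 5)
Your proof is correct and takes essentially the same approach as the paper: the paper's entire proof is the one-line remark that the lemma ``is just a double count of the number of preimages of the nodes of $\Gamma$ with respective multiplicities given by the ramification order.'' Your local balancing formula at the nodes of $\Gamma$, combined with the observation that $y$ is the unique node split by $Y_{y;Y_1}$ (using the tree structure coming from compact type and genus $0$), is precisely that double count carried out in detail.
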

\begin{proof} This is just a double count of the number of preimages of the nodes of $\Gamma$ with respective multiplicities given by the ramification order.  
\end{proof}

In what follows, by a $g_k^1$ we mean the map to $\mathbb{P}^1$ it defines. Using Lemma \ref{admcyc} we are able to prove the following proposition, relating the possible limit linear series of a generic element in two different irreducible components.

\begin{prop} \label{eventoodd} Let $k\geq 1, i\geq 1$ and consider two irreducible subspaces $Z_1 \subseteq \mathcal{M}_{2i,1}$ and $Z_2 \subseteq \mathcal{M}_{2i+1,1}$ such that a generic element $[C,x]$ of $Z_1$ has maximal gonality and does not admit a $g^1_{i+j}$ with ramification order at $x$ greater or equal to $2j$ for $1\leq j\leq k$. Moreover, assume there exists a 2-pointed elliptic curve $[E,y,z] \in \mathcal{M}_{1,2}$ satisfying $(2k-1)y \neq (2k-1)z$ as an equivalence of divisors and $[C\cup_{x\sim y}E,z] \in \overline{Z}_2$ for a generic element $[C,x]$ of $Z_1$. Then a generic element $[X,p]$ of $Z_2$ does not admit a $g^1_{i+k}$ with ramification order at $p$ greater or equal to $2k-1$.  
\end{prop}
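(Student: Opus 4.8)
The plan is to argue by contradiction using the theory of limit linear series of Eisenbud--Harris. Suppose a generic $[X,p]\in Z_2$ carried a $g^1_{i+k}$ with ramification order at least $2k-1$ at $p$. Since the existence of such a series with prescribed ramification is a closed condition, realised in the limit by limit linear series, and since $[C\cup_{x\sim y}E,z]\in\overline{Z}_2$ for generic $[C,x]\in Z_1$, one may choose a one-parameter degeneration inside $\overline{Z}_2$ with generic fibre in $Z_2$ and special fibre the compact-type curve $X_0=C\cup_{x\sim y}E$ with marked point $z\in E$; then $X_0$ admits a limit $g^1_{i+k}$ with ramification at least $2k-1$ at $z$. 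Write $d=i+k$, let $l_C,l_E$ be the $C$- and $E$-aspects, and record the vanishing sequences $a^{l_C}(x)=(\alpha_0,\alpha_1)$, $a^{l_E}(y)=(\beta_0,\beta_1)$, $a^{l_E}(z)=(\gamma_0,\gamma_1)$. Since $z\in E$, the ramification condition concerns $l_E$, so $\gamma_1\geq 2k-1$, while compatibility at the node gives $\alpha_0+\beta_1\geq d$ and $\alpha_1+\beta_0\geq d$.

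Next I would analyse $l_E$ on the elliptic curve through its two distinguished sections: $s_y\in l_E$ with $\mathrm{ord}_y(s_y)=\beta_1$ and $s_z\in l_E$ with $\mathrm{ord}_z(s_z)=\gamma_1$. If $s_y$ and $s_z$ are proportional, a single section vanishes to order at least $\beta_1 y+\gamma_1 z$, whence $\beta_1+\gamma_1\leq d$; combined with $\beta_1\geq d-\alpha_0$ and $\gamma_1\geq 2k-1$ this forces $\alpha_0\geq 2k-1$. Twisting $l_C$ down by $\alpha_0 x$ and clearing the remaining base points then produces a base-point-free pencil on $C$ of degree at most $i+k-\alpha_0\leq i-k+1\leq i$, contradicting the maximality of the gonality of the genus-$2i$ curve $C$, which equals $i+1$. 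If instead $s_y$ and $s_z$ are independent, then $\mathrm{ord}_y(s_z)=\beta_0$ and $\mathrm{div}(s_z)=\beta_0\,y+\gamma_1\,z+R$ with $R\geq 0$; taking degrees and using $\beta_0\geq d-\alpha_1$ together with $\gamma_1\geq 2k-1$ yields $\deg R\leq \alpha_1-(2k-1)$, so that $\alpha_1\geq 2k-1$.

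It remains to conclude in the independent case. If $\alpha_0\geq 1$, then $\alpha_0+\alpha_1\geq 2k$. When $1\leq \alpha_0\leq k-1$, setting $j=k-\alpha_0$ and twisting $l_C$ down by $\alpha_0 x$ gives a $g^1_{i+j}$ on $C$ with ramification order $\alpha_1-\alpha_0\geq 2(k-\alpha_0)=2j$ at $x$, contradicting the hypothesis on $Z_1$; when $\alpha_0\geq k$, the same twist after clearing base points gives a pencil of degree at most $i$, again contradicting maximal gonality. Finally, if $\alpha_0=0$, then $l_C$ is a genuine $g^1_{i+k}$ with ramification order $\alpha_1$ at $x$, so the hypothesis (case $j=k$) forces $\alpha_1\leq 2k-1$; with $\alpha_1\geq 2k-1$ this gives $\alpha_1=2k-1$ and equality throughout the degree count, namely $\beta_0=i-k+1$, $\gamma_1=2k-1$ and $R=0$. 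Moreover $\beta_1=d$ forces $L_E\cong \mathcal{O}_E(d\,y)$, so $\mathrm{div}(s_z)=(i-k+1)y+(2k-1)z\sim d\,y$ yields $(2k-1)z\sim (2k-1)y$, contradicting $(2k-1)y\neq (2k-1)z$. The main obstacle is this last step: pinning down the $E$-aspect precisely enough---through the compatibility inequalities, the degree count, and the hypothesis on $C$---to extract exactly the torsion relation that the genericity condition on $[E,y,z]$ is designed to exclude, the bookkeeping over the base-point order $\alpha_0$ and the proportional/independent dichotomy being the technical heart of the argument.
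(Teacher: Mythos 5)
Your proof is correct, and it takes a genuinely different technical route from the paper. The paper works entirely with admissible covers: it compactifies the relevant Hurwitz space \`a la Harris--Mumford, and the key tool is Lemma \ref{admcyc}, a degree count for the cover $f\colon Y\to\Gamma$ over the components of the genus-zero target, from which the restriction $f_{|C}$ is shown to violate either the gonality bound or the ramification hypothesis on $Z_1$, except in one extremal case ($\deg f_{|C}=i+k$, $\mathrm{ord}_x(f_{|C})=2k-1$) where the degrees force $\mathrm{ord}_y(f_{|E})=\mathrm{ord}_z(f_{|E})=\deg(f_{|E})=2k-1$ and hence $(2k-1)y\sim(2k-1)z$. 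You instead run the degeneration through Eisenbud--Harris limit linear series: aspects, vanishing sequences, and the crude compatibility inequalities $\alpha_0+\beta_1\geq d$, $\alpha_1+\beta_0\geq d$ at the node. The case structures mirror each other --- your proportional case and your $\alpha_0\geq 1$ cases correspond to the paper's ``$l\neq 0$'' degree-drop case (both killed by gonality or by the $g^1_{i+j}$ hypothesis after twisting down by $\alpha_0 x$), and your $\alpha_0=0$, $\alpha_1=2k-1$ case is exactly the paper's extremal case, where you pin down $L_E\cong\mathcal{O}_E(dy)$ and $\mathrm{div}(s_z)=(i-k+1)y+(2k-1)z$ to extract the same torsion relation. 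What your approach buys is self-containedness: it needs none of the paper's Section \ref{sec1} apparatus (directions, subcurves $Y_{y;Y_1}$, Lemma \ref{admcyc}), and the elliptic-component analysis is particularly transparent since linear equivalence on $E$ is read off directly from two explicit sections. What the paper's approach buys is reusability and cleaner bookkeeping: the admissible-cover lemma is set up once and applied verbatim to both Proposition \ref{eventoodd} and Proposition \ref{oddtoeven}, and since one always works with honest finite maps there are no base points to clear --- whereas in your argument the twists $l_C(-\alpha_0 x)$ may acquire base divisors away from $x$, an issue you correctly flag and handle but which adds a layer of case-checking (and which also slightly affects the reading of ``ramification order'' for series with base points in the $\alpha_0=0$ step; your argument survives either interpretation, since a base point only lowers the degree and strengthens the contradiction).
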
 
\begin{proof}
	We assume by contradiction that a generic element $[X,p]$ in $Z_2$ admits such a $g^1_{i+k}$. Then over all points $[C\cup_{x\sim y}E,z]$ as in the hypothesis there exists an admissible cover $f\colon Y \rightarrow \Gamma$ of degree $i+k$ and having ramification order greater or equal to $2k-1$ at a point collapsing to $z$ when we stabilize $Y$. 
	
	Denote by $l$ the natural number such that $\mathrm{deg}(f_{|C}) = i+k-l$. The existence of the ramification point implies the existence of an irreducible component $\mathbb{P}$ of $\Gamma$ such that the degree of $f_{|Y_{x;C}}$ over it is at least $2k-1$. 
	
	Applying Lemma \ref{admcyc} we get that $\mathrm{ord}_x(f_{|C}) \geq 2k-1-l$. If $l \neq 0$ we cannot have such a map $f_{|C}$ for a generic $[C,x] \in Z_1$ because of the hypothesis. 
	
	The only possibility left is $l=0$. Assume that $\mathrm{ord}_x(f_{|C}) = 2k-1$. As $\mathrm{deg}(f_{|C}) = i+k$, Lemma \ref{admcyc} implies that the image of $f_{|Y_{x;C}}$ is the union of the components $\mathbb{P}$ of $\Gamma$ in the same direction as $x$ with respect to $C$. 
	
	Applying Lemma \ref{admcyc} we get that the rational curves in $Y_{z;E}$ have a contribution of $2k-1-\mathrm{ord}_z(f_{|E})$ for the degree of $f$ over the target of $E$. It follows that the rational curves in $Y_{z;E}$ have a contribution of $2k-1-\mathrm{ord}_z(f_{|E})$ for the degree of $f$ over the target of $C$, which implies $\mathrm{ord}_z(f_{|E}) = 2k-1$.
	
	Let $Z$ be the other irreducible component of $Y$ containing the node $y$. The same reasoning as above for the curve $Y_{y;Z}$ implies $\mathrm{ord}_y(f_{|E}) = 2k-1$. Clearly $\mathrm{deg}(f_{|C}) = 2k-1$ and hence we have the equivalence of divisors $(2k-1)y = (2k-1)z$ which we know is not true. 
		
	It follows that $\mathrm{deg}(f_{|C}) = i+k$ and $\mathrm{ord}_x(f_{|C}) \geq 2k$, contradicting the hypothesis. Hence, our assumption was wrong and the proposition follows.  
	
\end{proof}

Next, we provide a counterpart to Proposition \ref{eventoodd} when the curves in $Z_1$ and $Z_2$ have odd and even genus, respectively. We should remark that although there are slight numerical differences between Proposition \ref{eventoodd} and Proposition \ref{oddtoeven}, the method of the proof is the same. Thus, by degenerating to the boundary of $Z_2$ in order to get a contradiction with the properties of the curves in $Z_1$, we obtain:
\begin{prop} \label{oddtoeven} Let $k\geq 1, i \geq 2$ and consider two irreducible subspaces $Z_1 \subseteq \mathcal{M}_{2i-1,1}$ and $Z_2 \subseteq \mathcal{M}_{2i,1}$ such that a generic element $[C,x]$ of $Z_1$ does not admit a $g^1_{i+j}$ with ramification order at $x$ greater or equal to $2j+1$ for $0\leq j \leq k$. Moreover, assume there exists a 2-pointed elliptic curve $[E,y,z] \in \mathcal{M}_{1,2}$ satisfying $2ky \neq 2kz$ as an equivalence of divisors and $[C\cup_{x\sim y}E,z] \in \overline{Z}_2$ for a generic element $[C,x]$ of $Z_1$. Then a generic element $[X,p]$ in $Z_2$ has maximal gonality and does not admit a $g^1_{i+k}$ with ramification order at $p$ greater or equal to $2k$.  
\end{prop}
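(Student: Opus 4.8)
The plan is to mirror the proof of Proposition \ref{eventoodd}, shifting every parity by one. Since the curves parametrized by $Z_2$ have even genus $2i$, their maximal gonality equals $i+1$, so the maximal gonality assertion is exactly the statement that a generic $[X,p]$ carries no $g^1_i$; this is the ``$j=0$'' case, while the second assertion is the ``$j=k$'' case. I would establish the two separately through the same degeneration, with the full family of hypotheses on $Z_1$ (indexed by $0\leq j\leq k$) feeding the intermediate reductions.

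For the second assertion, assume for contradiction that a generic $[X,p]\in Z_2$ carries a $g^1_{i+k}$ with ramification order at least $2k$ at $p$. Because $[C\cup_{x\sim y}E,z]\in\overline{Z}_2$ for generic $[C,x]\in Z_1$, this pencil specializes to an admissible cover $f\colon Y\to\Gamma$ of degree $i+k$ whose ramification order at a point collapsing to $z$ is at least $2k$. Writing $\mathrm{deg}(f_{|C})=i+k-l$ and running the direction count of Lemma \ref{admcyc} on $Y_{x;C}$ exactly as before, the ramification over $z$ propagates to the bound $\mathrm{ord}_x(f_{|C})\geq 2k-l$. When $l\neq 0$, setting $j'=k-l$ exhibits $f_{|C}$ as a $g^1_{i+j'}$ with $\mathrm{ord}_x(f_{|C})\geq 2j'+l\geq 2j'+1$, which for $0\leq j'\leq k$ is forbidden by the hypotheses on $Z_1$ and for $j'<0$ forces $\mathrm{deg}(f_{|C})<i$, contradicting the maximal gonality of $C$ recorded by the $j=0$ hypothesis; this is the one-line analogue of the corresponding step in Proposition \ref{eventoodd}.

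This leaves $l=0$, so $\mathrm{deg}(f_{|C})=i+k$. If $\mathrm{ord}_x(f_{|C})\geq 2k+1$ we contradict the $j=k$ hypothesis directly, so we may assume $\mathrm{ord}_x(f_{|C})=2k$; since $f_{|C}$ now attains the full degree $i+k$, Lemma \ref{admcyc} identifies the image of $f_{|Y_{x;C}}$ with the components of $\Gamma$ lying in the direction of $x$ and propagates the ramification along the elliptic tail, yielding $\mathrm{ord}_z(f_{|E})=\mathrm{ord}_y(f_{|E})=2k$ with $f_{|E}$ totally ramified of degree $2k$ at both $y$ and $z$. Hence $2ky=2kz$, contradicting the standing hypothesis $2ky\neq 2kz$. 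For the maximal gonality assertion one argues the same way with degree $i$: a limiting admissible cover with $\mathrm{deg}(f_{|C})<i$ violates the maximal gonality of $C$, while $\mathrm{deg}(f_{|C})=i$ makes $f_{|C}$ a $g^1_i$ on $C$ and forces $f_{|E}$ to be nonconstant on the genus-one curve $E$, so $\mathrm{ord}_x(f_{|C})\geq 2>0$ and $f_{|C}$ is genuinely ramified at $x$, contradicting the $j=0$ hypothesis.

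The step I expect to be the main obstacle is the $l=0$, minimal-ramification bookkeeping: one must verify that once $f_{|C}$ carries the entire degree $i+k$ and meets the node with ramification order exactly $2k$, the whole deficit is absorbed along the elliptic component, so that $f_{|E}$ is totally ramified of degree $2k$ at both $y$ and $z$. This is precisely where the torsion condition $2ky\neq 2kz$ is consumed, and the delicate point is the correct propagation of ramification orders through the chain of rational components of $\Gamma$ via Lemma \ref{admcyc}, carried out exactly as in Proposition \ref{eventoodd} but with $2k-1$ uniformly replaced by $2k$.
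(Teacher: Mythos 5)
Your proposal is correct and is exactly the argument the paper intends: the paper gives no separate proof of Proposition \ref{oddtoeven}, stating only that it follows by the same degeneration method as Proposition \ref{eventoodd}, and your adaptation (limiting admissible cover of degree $i+k$, the Lemma \ref{admcyc} bookkeeping giving $\mathrm{ord}_x(f_{|C})\geq 2k-l$, the $l\neq 0$ reduction to the $Z_1$ hypotheses, and the $l=0$ case consuming the torsion condition $2ky\neq 2kz$ via total ramification of $f_{|E}$ at $y$ and $z$, plus a separate degree-$i$ run for maximal gonality) is precisely that proof with $2k-1$ replaced by $2k$. One cosmetic remark: since the paper's convention for ramification order is the multiplicity of the point in the fiber, the $j=0$ hypothesis already forbids any $g^1_i$ on $C$ whatsoever, so in your degree-$i$ case the contradiction is immediate and the extra step deducing $\mathrm{ord}_x(f_{|C})\geq 2$ from nonconstancy of $f_{|E}$ (valid, since a genus-one curve admits no degree-one map to $\mathbb{P}^1$) is not needed.
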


\section{Limit linear series on strata of differentials}
Let $g \geq 3$ and $\mu$ a positive, length $n$ partition of $2g-2$. Our first task is to reduce Theorem \ref{trmabel} to the study of only one case, namely $\mu = (2g-2)$. This is done via an argument by specialization. 

Let $[\mathbb{P}^1, y, y_1, \ldots, y_n]$ be an $n+1$-pointed rational curve and consider the morphism 
\[ \mathcal{H}_g(2g-2)\rightarrow \overline{\mathcal{M}}_{g,n} \]
sending a pointed curve $[C,x]\in\mathcal{H}_g(2g-2)$ to $[C\cup_{x\sim y}\mathbb{P}^1, y_1\ldots y_n]$.
As a consequence of \cite[Corollary 1.4]{DaweiAbComp}, the space $\mathcal{H}_g(2g-2)$ is mapped by this morphism into the compactification of $\mathcal{H}_g(\mu)$ inside $\overline{\mathcal{M}}_{g,n}$. Moreover, in the case $\mu =(2m_1,\ldots, 2m_n)$, when $\mathcal{H}_g(\mu)$ has two nonhyperelliptic components $\mathcal{H}^{\mathrm{odd}}_g(\mu)$ and $\mathcal{H}^{\mathrm{even}}_g(\mu)$ depending on the parity of the spin structure, we see that the morphism preserves the parity of the component. As a consequence, every nonhyperelliptic component of a stratum $\mathcal{H}_g(\mu)$ contains at least one of $\mathcal{H}^{\mathrm{even}}_g(2g-2)$ or $\mathcal{H}^{\mathrm{odd}}_g(2g-2)$ in its compactification. 

When $k=2$ and the length $n$ partition $\mu$ of $4g-4$ is positive, we denote $\mathcal{Q}^2_g(\mu) \coloneqq \mathcal{H}^2_g(\mu)\setminus\mathcal{H}_g(\frac{\mu}{2})$ (here $\mathcal{Q}^2_g(\mu) \coloneqq \mathcal{H}^2_g(\mu)$ if $\mu$ has at least one odd entry) and we consider the morphism
\[ \mathcal{Q}_g(4g-4) \rightarrow \overline{\mathcal{M}}_{g,n}  \]
mapping $[C,x] \in \mathcal{Q}_g(4g-4)$ to $[C\cup_{x\sim y}\mathbb{P}^1, y_1\ldots y_n]$. Because of \cite[Theorem 1.5]{Daweik-diffcomp}, the image of this map is contained in the closure of $\mathcal{Q}_g(\mu)$ inside $\overline{\mathcal{M}}_{g,n}$. Lastly, when $g=4$ and $\mathcal{Q}_4(\mu)$ has two nonhyperelliptic components $\mathcal{Q}^{\mathrm{irr}}_4(\mu)$ and $\mathcal{Q}^{\mathrm{reg}}_4(\mu)$ , Proposition 7.4 in \cite{ChenM} implies that the morphism maps  $\mathcal{Q}^{\mathrm{irr}}_4(12)$ and  $\mathcal{Q}^{\mathrm{reg}}_4(12)$ into $\overline{\mathcal{Q}}^{\mathrm{irr}}_4(\mu)$ and $\overline{\mathcal{Q}}^{\mathrm{reg}}_4(\mu)$, respectively.

As having maximal gonality is an open condition, it is enough to prove Theorems \ref{trmabel} and \ref{trmquad} for the length one partition. 

We will treat first the case of canonical strata. To simplify notation we denote by $\mathcal{H}^{\mathrm{nonhyp}}_g(2g-2)$ the union of the nonhyperelliptic components of $\mathcal{H}_g(2g-2)$. Moreover, by a generic point of a space with more than one component we mean a generic point of any of the components.

\begin{prop} \label{subcanonical} In the even case $g=2i$, a generic element of $\mathcal{H}_{2i}^{\mathrm{nonhyp}}(4i-2)$ has maximal gonality and, for any $1\leq j \leq i-2$, does not admit a $g^1_{i+j}$ with ramification order at the marked point greater than or equal to $2j$. Moreover, a generic element of $\mathcal{H}_{2i}^{\mathrm{odd}}(4i-2)$ does not admit a $g^1_{2i-1}$ with ramification order at the marked point greater than or equal to $2i-2$. 
	
	In the odd case $g=2i+1$, for any $1\leq j \leq i-1$, a generic element of $\mathcal{H}_{2i+1}^{\mathrm{nonhyp}}(4i)$ does not admit a $g^1_{i+j}$ with ramification order at the marked point greater than or equal to $2j-1$. Moreover, a generic element of $\mathcal{H}_{2i+1}^{\mathrm{odd}}(4i)$ does not admit a $g^1_{2i}$ with ramification order at the marked point greater than or equal to $2i-1$.   
\end{prop}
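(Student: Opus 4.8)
The plan is to argue by induction on the genus, climbing one genus at a time from a base case in genus $3$ and invoking Proposition \ref{oddtoeven} for the passage $2i-1\rightsquigarrow 2i$ and Proposition \ref{eventoodd} for the passage $2i\rightsquigarrow 2i+1$. Concretely, I would take as inductive hypothesis the full statement of the proposition at genus $g$, for both spin components, and deduce the statement at genus $g+1$. The base case $g=3$ is immediate: a non-hyperelliptic curve of genus $3$ is a smooth plane quartic of gonality $3$, so it carries no $g^1_2$ at all, which is (more than) the only non-vacuous assertion in genus $3$. One can equally start the induction from genus $4$, where maximal gonality is automatic on the trigonal locus and the odd-component clause ``no $g^1_3$ with ramification $\geq 2$ at $x$'' follows from $h^0(\mathcal{O}_C(3x))=1$, a consequence of Clifford's inequality and the oddness of the theta characteristic $\mathcal{O}_C(3x)$.

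To run the inductive step I would make the degeneration underlying Propositions \ref{eventoodd} and \ref{oddtoeven} explicit. Attaching to a general $[C,x]$ of the genus-$g$ component a fixed $2$-pointed elliptic curve $[E,y,z]$ along $x\sim y$ yields a stable genus-$(g+1)$ curve marked at $z$, and I would certify that $[C\cup_{x\sim y}E,z]$ lies in the closure of $\mathcal{H}_{g+1}(2g)$ by exhibiting a twisted differential in the sense of \cite{DaweiAbComp} and \cite{Daweik-diffcomp}: on $C$ one takes $\omega_C$, with its zero of order $2g-2$ at $x$, and on $E$ a meromorphic differential with divisor $2g\,z-2g\,y$. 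Such a differential on $E$ exists exactly when $z-y$ is a $2g$-torsion point of $\operatorname{Pic}^0(E)$, and the global residue condition is automatic because $y$ is its only pole, so the residue theorem forces the residue there to vanish. Choosing $z-y$ of exact order $2g$ then also secures the non-torsion hypotheses $(2k-1)y\neq(2k-1)z$ and $2ky\neq 2kz$ for every $k$ in range, since the multipliers $2k-1$ and $2k$ all lie strictly between $0$ and $2g$ and hence are not multiples of the order of $z-y$.

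It then remains to match hypotheses and to track spin components. The numerical clauses required of the lower-genus stratum in Propositions \ref{eventoodd} and \ref{oddtoeven} coincide, after the shift $j\mapsto j+1$, with those supplied by the inductive hypothesis, while the topmost required clause ``no $g^1_{2i}$ with ramification $\geq 2i$ at $x$'' is vacuous because a $g^1_{2i}$ has ramification order at most $2i-1$. To decide which spin component of $\mathcal{H}_{g+1}(2g)$ receives the degeneration I would compute the parity of the limiting theta characteristic $\mathcal{O}_{C\cup E}(g\,z)$ using Cornalba's formula \cite[Example 6.1]{corn}; the elliptic tail contributes a fixed shift depending only on $g$, so that the two genus-$g$ components are routed to the two genus-$(g+1)$ components in a controlled fashion. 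At the intermediate degrees the required clauses lie in the non-hyperelliptic range and hold on both components; for the sharp top clause I would use that the diagonal pencil $|(g-1)x|$ does not move, i.e. $h^0(\mathcal{O}_C((g-1)x))=1$ on the odd component (the inductive content) and $=0$ on the even one (immediate from the minimality of the even theta characteristic), so that the hypothesis needed by whichever component the parity routing selects is indeed available.

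The step I expect to be the main obstacle is this last one: certifying that the elliptic-tail degeneration lands in the closure of the intended spin component, rather than merely in that of the stratum, and doing so compatibly with the odd/even labelling in the statement. This forces the Cornalba parity bookkeeping to interlock precisely with the ramification clauses, and in particular requires that the top clause be available on both components --- the even case via the vanishing of $h^0(\mathcal{O}_C((g-1)x))$ --- even though only the odd case is recorded in the statement. By contrast, the construction of the elliptic tail and the verification of the torsion and non-torsion conditions are routine once the boundary description is invoked, after which Propositions \ref{eventoodd} and \ref{oddtoeven} yield the conclusion immediately.
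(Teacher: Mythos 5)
Your skeleton --- induction on genus, an elliptic-tail clutching certified by a twisted differential with $z-y$ of exact order $2g$, then Propositions \ref{eventoodd} and \ref{oddtoeven} --- is indeed the paper's, but the step you dismiss as ``vacuous'' is exactly where the proposal breaks, and it is where the paper has to add a genuinely separate argument. In this paper ``ramification order at $x$'' means the multiplicity of $x$ in a fibre of the pencil: this is forced by the proofs of Propositions \ref{eventoodd} and \ref{oddtoeven} (a point of ramification order $\geq 2k-1$ contributes at least $2k-1$ to the degree over a component of the target) and by the fact that the $j=1$ clause must yield maximal gonality in Theorem \ref{trmabel}. Under this convention a totally ramified $g^1_{2i}$ has ramification order $2i$ at $x$, so the clause ``no $g^1_{2i}$ with ramification $\geq 2i$ at $x$'' is not vacuous; worse, the full hypothesis set of Proposition \ref{eventoodd} with $k=i$ is unsatisfiable on the stratum. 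Indeed, Riemann--Roch with $\omega_C\cong\mathcal{O}_C((4i-2)x)$ gives $h^0(2ix)=1+h^0((2i-2)x)\geq 2$; a pencil inside $|2ix|$ containing the member $2ix$ has base locus $bx$ supported at $x$, and its moving part is a base-point-free $g^1_{2i-b}$ containing the divisor $(2i-b)x$, hence totally ramified at $x$. Whatever $b$ is, this pencil violates one of the hypotheses with $k=i$: maximal gonality if $2i-b\leq i$, and the clause for $j=i-b$ otherwise, since $2i-b\geq 2(i-b)$. So your application with $k=i$ can never be made, and the top (odd-component) clause at the new genus cannot be extracted from the degeneration propositions at all. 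The paper proves it by a direct argument your proposal omits: if a $g^1_{2i-1}$ with ramification $\geq 2i-2$ at $x$ existed, then $h^0((2i-2)x+y)\geq 2$ for some $y$; Riemann--Roch gives $h^0(2ix)=2$ and $h^0(2ix-y)\geq 2$, so $y$ is a base point of $|2ix|$, forcing $y=x$ and $h^0((2i-1)x)=2$, contradicting the oddness of the spin structure. (You invoke exactly this mechanism in your genus-$4$ remark, but it is needed at every inductive step, not only at the bottom.)

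Your spin routing also differs from the paper's and carries its own gap. You fix a parity-preserving tail ($z-y$ of exact order $2g$) and feed each genus-$g$ spin component into the same-parity genus-$(g+1)$ component; on the even side this requires the top clause for the \emph{even} lower component, which the induction does not supply, and your substitute --- that $h^0((g-1)x)=0$ generically on the even component is ``immediate from the minimality of the even theta characteristic'' --- is not immediate: the generic point of the even stratum component is not a generic even spin curve, and this statement is essentially Bullock's theorem \cite{subcanbul}, which the paper deliberately avoids assuming (Proposition \ref{subcanonical} is advertised as giving a new proof of it). The paper's routing sidesteps both issues: it clutches only the odd lower component $\mathcal{H}^{\mathrm{odd}}$ and reaches \emph{both} spin components upstairs by varying the elliptic tail through the different connected components of $\mathcal{H}_1(2g,-2g)$ away from the low-torsion loci (the parity of the glued spin structure depends on whether $g(z-y)\sim 0$, i.e.\ on the torsion order of $z-y$), so that only the odd-component hypotheses --- exactly the ones the induction provides --- are ever used.
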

\begin{proof} 
	We prove the proposition by induction on the genus. The initial case of the stratum $\mathcal{H}_3^{\mathrm{nonhyp}}(4)$ is clear. We assume the proposition to be true up to genus $g-1$ and prove it for genus $g$. First, consider the case when the genus $g=2i$ is even.  
	
	We consider the clutching morphism
	\[ \pi\colon \mathcal{H}_{2i-1}^{\mathrm{odd}}(4i-4)\times \mathcal{H}_1(4i-2,-4i+2)\rightarrow \overline{\mathcal{H}}_{2i}^{\mathrm{nonhyp}}(4i-2) \subseteq \overline{\mathcal{M}}_{2i,1}\]
	obtained by glueing the marking of the first component to the second marked point in the elliptic stratum. Using the compactification of the strata of differentials appearing in \cite{DaweiAbComp}, we see that the image of $\pi$ lies in the specified target. 
	
	 Because for any $1 \leq j \leq i-2$ the stratum $\mathcal{H}_1(4i-2,-4i+2)\setminus \mathcal{H}_1(2j,-2j)$ has both odd and even components, it follows that the image of $\pi$ intersect both components of $\mathcal{H}_{2i}^{\mathrm{nonhyp}}(4i-2)$. The induction hypothesis assures that all conditions in Proposition \ref{oddtoeven} are satisfied. We conclude that a generic element of $\mathcal{H}_{2i}^{\mathrm{nonhyp}}(4i-2)$ has maximal gonality and does not admit a $g^1_{i+j}$ with ramification order greater or equal to $2j$ at the marking. 
	 
	 We are left to prove that a generic point $[C,x]$ of $\mathcal{H}_{2i}^{\mathrm{odd}}(4i-2)$ does not admit a $g^1_{2i-1}$ with ramification order at $x$ greater than or equal to $2i-2$. Assume that such a $g^1_{2i-1}$ exists. Hence there exists an $y\in C$ such that $h^0(C, (2i-2)x+y) \geq 2$. Moreover, as $[C,x]$ does not admit a $g^1_{2i-2}$ with ramification order greater or equal to $2i-4$ at $x$, it follows that $h^0(C,(2i-2)x) = 1$. Applying the Riemann-Roch theorem we deduce that 
	 \[h^0(C,2ix) = 2 \textrm{ \ and \ } h^0(C,2ix-y)\geq 2 \]
	 It follows that $y$ is a base-point of the linear system $|2ix|$, hence $y = x$ and $h^0(C, (2i-1)x) = 2$ which is clearly false in the odd-spin component. It follows from the contradiction that our assumption was wrong and $[C,x]$ does not admit a $g^1_{2i-1}$ with ramification order at $x$ greater than or equal to $2i-2$. This completes the case $g=2i$. 
	 
	 For the case $g=2i+1$ we consider the clutching 
	\[ \pi\colon \mathcal{H}_{2i}^{\mathrm{odd}}(4i-2)\times \mathcal{H}_1(4i,-4i)\rightarrow \overline{\mathcal{H}}_{2i+1}^{\mathrm{nonhyp}}(4i-2) \subseteq \overline{\mathcal{M}}_{2i,1} \]
	obtained by glueing the marking of the first component to the second marked point in the elliptic stratum. Again, the compactification of the strata of differentials appearing in \cite{DaweiAbComp} ensures that the image is in the specified target. Everything follows analogously to the case $g=2i$.
    
\end{proof}

A similar approach for the case of $2$-canonical divisors implies the following: 
\begin{prop}  \label{quadsubcan}
		A generic point $[C,x]$ of $\mathcal{Q}_{2i+1}(8i) \coloneqq \mathcal{H}^2_{2i+1}(8i) \setminus \mathcal{H}_{2i+1}(4i)$ does not admit a $g^1_{i+j}$ with ramification order greater or equal to $2j-1$ at $x$, for any $1\leq j \leq i-1$. For even genus, a generic element $[C,x]$ of $\mathcal{Q}_{2i}(8i-4)$ has maximal gonality and does not admit a $g^1_{i+j}$ with ramification order greater or equal to $2j$ at $x$, for any $1\leq j \leq i-3$.
\end{prop}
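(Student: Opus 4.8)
The plan is to reproduce the inductive scheme of Proposition \ref{subcanonical}, with abelian strata replaced by their $2$-canonical counterparts and canonical twisted differentials replaced by $2$-differentials. I would argue by induction on the genus, treating the even stratum $\mathcal{Q}_{2i}(8i-4)$ and the odd stratum $\mathcal{Q}_{2i+1}(8i)$ together: the even step feeds on the previous odd genus and the odd step on the even genus of the same index, with the low-genus strata (including the sporadic genus-$4$ components classified in \cite{ChenM}) serving as base cases.

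For the inductive step I would use the compactification of $k$-canonical strata from \cite[Theorem 1.5]{Daweik-diffcomp} to set up two clutching morphisms, glueing the marked point of the first factor to the polar point of an elliptic $2$-differential:
\[\pi\colon \mathcal{Q}_{2i-1}(8i-8)\times \mathcal{H}^2_1(8i-4,-8i+4)\rightarrow \overline{\mathcal{Q}}_{2i}(8i-4),\]
\[\pi\colon \mathcal{Q}_{2i}(8i-4)\times \mathcal{H}^2_1(8i,-8i)\rightarrow \overline{\mathcal{Q}}_{2i+1}(8i).\]
The matching condition $\mathrm{ord}_x+\mathrm{ord}_y=-4$ at the node forces precisely these elliptic strata and guarantees that the resulting $2$-differential has a single zero at the remaining marking. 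I would then invoke Proposition \ref{oddtoeven} in the even case and Proposition \ref{eventoodd} in the odd case. The torsion hypothesis is checked directly: on a generic $E$ in $\mathcal{H}^2_1(8i-4,-8i+4)$ (resp. $\mathcal{H}^2_1(8i,-8i)$) the class $z-y$ has order $8i-4$ (resp. $8i$) in $\mathrm{Pic}^0(E)$, which exceeds the relevant $2k$ (resp. $2k-1$) throughout the inductive range, so $2k(z-y)\neq 0$ (resp. $(2k-1)(z-y)\neq 0$). Running the two bridge propositions over all admissible $k$ then yields maximal gonality together with the full even range $1\le j\le i-3$, and the partial odd range $1\le j\le i-3$.

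The hard part is that the induction alone stops at $j=i-3$ in the odd case, so the two top cases $j=i-2$ and $j=i-1$ are left open. In Proposition \ref{subcanonical} the corresponding borderline cases were settled by the parity of the spin structure (the ``Moreover'' statements for the odd-spin component), a tool that is unavailable here since the quadratic strata $\mathcal{Q}_g(\mu)$ carry no spin invariant. I would instead dispose of $j=i-2,i-1$ by a Riemann--Roch argument built on the defining relation $\mathcal{O}_C(8i\,x)\cong\omega_C^2$ and on the fact that $[C,x]\notin\mathcal{H}_{2i+1}(4i)$, i.e. that $\eta\coloneqq\mathcal{O}_C(4i\,x)\otimes\omega_C^{-1}$ is a nontrivial $2$-torsion line bundle; in particular $h^0(C,\eta)=0$ and $h^0\bigl(C,(4i)x\bigr)=g-1$. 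Assuming a pencil with the prohibited ramification at $x$ and applying Riemann--Roch repeatedly would force an effective divisor supported at $x$ whose existence contradicts the nontriviality of $\eta$, exactly paralleling the contradiction ``$h^0\bigl((2i-1)x\bigr)=2$ is impossible on the odd-spin component'' in the canonical case. Carrying out these two top-case computations, and confirming that each clutching genuinely meets the (essentially unique) quadratic component, are where I expect the real effort to go; everything else is formal once the compactification and the two bridge propositions are granted.
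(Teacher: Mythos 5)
Your even-genus step coincides with the paper's: the clutching $\mathcal{Q}_{2i-1}(8i-8)\times \mathcal{H}_1(8i-4,-8i+4)\rightarrow \overline{\mathcal{Q}}_{2i}(8i-4)$ together with Proposition \ref{oddtoeven} (and \cite[Lemma 7.7]{ChenM} for the two components of $\mathcal{Q}_4(12)$) is exactly how the paper argues. The genuine gap is in your odd-genus step. As you concede, feeding the even-genus quadratic result into Proposition \ref{eventoodd} only reaches $j\leq i-3$, and the missing cases $j=i-2,\,i-1$ are not a cosmetic loss: after reindexing, your own even step at genus $2i$ requires the odd statement at genus $2i-1$ in its \emph{full} range $1\leq j\leq i-2$ (the hypothesis of Proposition \ref{oddtoeven} for $k\leq i-3$ is precisely that range), so the unproven top cases break the whole induction, not just its last rung. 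Your proposed repair via Riemann--Roch and nontriviality of $\eta=\mathcal{O}_C(4ix)\otimes\omega_C^{-1}$ is unlikely to close it. In the canonical case the borderline argument is not a pure Riemann--Roch computation: it hinges on $(2i-1)x$ being a theta characteristic and on the deformation invariance of the parity of $h^0$ of a theta characteristic, which is odd on $\mathcal{H}^{\mathrm{odd}}_{2i}(4i-2)$; indeed on the even-spin component the analogous statement is \emph{false}, since there $(2i-1)x$ is an effective even theta characteristic, forcing $h^0((2i-1)x)\geq 2$. In your situation $(2i)x$ is a square root of $\omega_C\otimes\eta$, not a theta characteristic, no parity invariant exists for such bundles, and the conditions $h^0(C,\eta)=0$, $h^0(C,4ix)=g-1$ hold on the entire stratum and are far too weak to forbid, say, $h^0((2i-1)x)\geq 2$ at special points. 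The paper sidesteps exactly this difficulty by anchoring the odd step on the \emph{abelian} stratum: it clutches $\mathcal{H}^{\mathrm{odd}}_{2i}(4i-2)\times\mathcal{R}^2_1(8i,-8i)\rightarrow\overline{\mathcal{Q}}_{2i+1}(8i)$, where $\mathcal{R}^2_1(8i,-8i)$ is the locus of elliptic quadratic differentials with vanishing $2$-residue at the pole (non-empty by \cite[Theorem 1.2]{GenTah}), and then imports the full range $1\leq j\leq i-1$ --- including the spin-parity ``moreover'' cases --- from Proposition \ref{subcanonical} via Proposition \ref{eventoodd}.

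A second, smaller point: order-matching at the node is not sufficient for the clutched curve to lie in the stratum closure; the global residue condition of \cite[Theorem 1.5]{Daweik-diffcomp} must also be checked, and you never address it. For your two clutchings it happens to be harmless, because the genus $\geq 2$ factor carries a \emph{primitive} quadratic differential, so the canonical double cover is connected over it and the condition reduces to the vanishing of the sum of the two conjugate residues over the node, which is automatic. But this is precisely the subtlety that forces the paper to introduce $\mathcal{R}^2_1(8i,-8i)$ in its own odd step (there the genus $2i$ factor carries the square of an abelian differential, the cover splits over it, and the $2$-residue at the pole must vanish separately), so an argument at this level of delicacy cannot leave it implicit.
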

\begin{proof} 
	We define the subspace $\mathcal{R}_1^2(8i,-8i) \subseteq \mathcal{H}^2_1(8i,-8i)\setminus \mathcal{H}_1(4i,-4i)$ to be the space parametrizing points $[E,x,y]$ such that if $s$ is a quadratic differential satisfying $\mathrm{div}(s) = 8ix-8iy$, then $\mathrm{Res}^2_y(s) = 0$, where $\mathrm{Res}^2_y(s)$ is defined to be the second power of $\mathrm{Res}_y(\omega)$ where $\omega$ is a meromorphic abelian differential at $y$ satisfying locally $s = \omega^2$. We know from \cite[Theorem 1.2]{GenTah} that the space  $\mathcal{R}_1^2(8i,-8i)$ is non-empty. 
	
	The proof of Proposition \ref{subcanonical} can  be adapted to the case of quadratic strata by considering the clutching morphism
	\[ \pi\colon \mathcal{H}^{\mathrm{nonhyp}}_{2i}(4i-2)\times \mathcal{R}^2_1(8i,-8i) \rightarrow \overline{\mathcal{Q}}_{2i+1}(8i) \subseteq \overline{\mathcal{M}}_{2i+1,1} \]
	where the marking of the first component is glued to the second marked point of the stratum $\mathcal{R}^2_1(8i,-8i)$. The fact that the image is in the specified target follows from the description of the compactification in \cite{Daweik-diffcomp}. Moreover, the fact that $\mathcal{R}_1^2(8i,-8i) \subseteq \mathcal{H}^2_1(8i,-8i)\setminus \mathcal{H}_1(4i,-4i)$ ensures that $\mathcal{R}_1^2(8i,-8i)$ does not intersect $\mathcal{H}_1(2j-1,-2j+1)$ for every $1\leq j\leq i-1$.
	
	The space $\mathcal{Q}_{2i+1}(8i)$ is irreducible for all $i\geq 1$ hence the image of the map $\pi$ intersects all the (one) components. By applying Proposition \ref{subcanonical} for $\mathcal{H}^{\mathrm{nonhyp}}_{2i}(4i-2)$ and Proposition \ref{eventoodd} for the spaces $Z_1 = \mathcal{H}^{\mathrm{nonhyp}}_{2i}(4i-2)$ and $Z_2 = \mathcal{Q}_{2i+1}(8i)$ we get the desired conclusion for odd genus. 
	
	For even genus, we consider the clutching 
	\[\pi\colon \mathcal{Q}_{2i-1}(8i-8)\times \mathcal{H}_1(8i-4,-8i+4) \rightarrow \overline{\mathcal{Q}}_{2i}(8i-4) \subseteq \overline{\mathcal{M}}_{2i,1}\]  
	where we glue the marking of the first component to the second marked point of the stratum $\mathcal{H}_1(8i-4,-8i+4)$. Again the description of the compactification in \cite{Daweik-diffcomp} implies this is well defined. The conclusion follows from Proposition \ref{oddtoeven} for $i\geq 3$ when we know the space $\mathcal{Q}_{2i}(8i-4)$ to be irreducible. When $i=2$ the stratum $\mathcal{Q}_4(12)$ has two irreducible components. The fact that a generic point of either component of $\mathcal{Q}_{4}(12)$ does not admit a $g^1_2$, see \cite[Lemma 7.7]{ChenM}, completes the proof.
\end{proof}	
Observe that the specialization argument and Proposition \ref{quadsubcan} imply Theorem \ref{trmquad} for all genera $g$ except $g=3$, case for which the proposition is empty. As such, we consider this case separately. 
\begin{prop} \label{genus3}
	For a generic point $[C,x]$ of $\mathcal{Q}_3(8)$ the underlying curve $C$ is trigonal. 
\end{prop}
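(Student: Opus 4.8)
The plan is to exploit the fact that in genus $3$ the maximal gonality equals $3$, so that ``$C$ trigonal'' is equivalent to ``$C$ nonhyperelliptic''. Since being nonhyperelliptic is an open condition on $\mathcal{M}_{3,1}$ and $\mathcal{Q}_3(8)$ is irreducible (as already recorded above, $\mathcal{Q}_{2i+1}(8i)$ is irreducible for all $i\geq 1$), it suffices to exhibit a single point $[C,x]\in\mathcal{Q}_3(8)$ with $C$ nonhyperelliptic. I would realize such a point in the plane–quartic model. A nonhyperelliptic genus $3$ curve is a smooth plane quartic $C\subseteq\mathbb{P}^2$ with $K_C=\mathcal{O}_C(1)$, hence $2K_C=\mathcal{O}_C(2)$; thus producing a point of $\mathcal{H}^2_3(8)$ amounts to finding a smooth quartic $C$, a point $x\in C$, and a conic $Q$ with $C\cdot Q=8x$ (an osculating conic of contact order $8$), since then $8x\in|\mathcal{O}_C(2)|=|2K_C|$. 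To land in $\mathcal{Q}_3(8)=\mathcal{H}^2_3(8)\setminus\mathcal{H}_3(4)$ rather than in the abelian locus, I must additionally arrange that $4x\not\sim K_C$, i.e. that $x$ is not a hyperflex (no line $\ell$ with $C\cdot\ell=4x$).

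First I would fix a smooth conic $Q$ and a point $x\in Q$, and consider the quartics $F$ whose restriction $F|_Q$ vanishes to order $8$ at $x$. Since the restriction map $H^0(\mathbb{P}^2,\mathcal{O}(4))\to H^0(Q,\mathcal{O}_Q(8))$ is surjective with kernel $Q\cdot H^0(\mathbb{P}^2,\mathcal{O}(2))$, and the sections of $\mathcal{O}_Q(8)\cong\mathcal{O}_{\mathbb{P}^1}(8)$ vanishing to order $8$ at $x$ form a line, these $F$ form a linear system $\mathcal{L}$ with base locus supported at the single point $x$. A dimension count ($5$ parameters for $Q$, $1$ for $x$, $\dim\mathcal{L}=6$, minus $\dim\mathrm{PGL}_3=8$) returns $4=\dim\mathcal{Q}_3(8)$, confirming that $\mathcal{L}$ sweeps out $\mathcal{H}^2_3(8)$ and that I am looking in the right place.

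Next I would show that a general member $C\in\mathcal{L}$ is smooth: away from the base locus this is Bertini, while at $x$ it requires a short local computation. Choosing analytic coordinates $(u,v)$ at $x$ with $Q=\{v=0\}$, every $F\in\mathcal{L}$ has the form $F=c\,u^8+v\,G$, so $C$ is smooth at $x$ precisely when $G(x)\neq 0$; because the summand $Q\cdot(\text{quadric})$ contributes an arbitrary value to $G(x)$, this holds for the general $F$. For such a $C$ one has $C\cdot Q=8x$ and hence $8x\sim 2K_C$. Finally, to guarantee $[C,x]\in\mathcal{Q}_3(8)$ I would verify that $4x\not\sim K_C$ for the general member: this is the open condition that the tangent line to $C$ at $x$ meets $C$ in $2x+p+q$ with $\{p,q\}\neq\{x\}$, and one checks that it is not satisfied identically on $\mathcal{L}$. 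The resulting smooth quartic $[C,x]$ is then a nonhyperelliptic point of $\mathcal{Q}_3(8)$, and irreducibility together with openness upgrades this to the generic point, proving the proposition.

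The main obstacle I anticipate is the behavior at the base point $x$: Bertini only yields smoothness off the base locus, so the delicate step is the local analysis guaranteeing that the general member is smooth at $x$, and separately that it is not forced to be a hyperflex there, precisely because the entire system $\mathcal{L}$ is tangent to $Q$ to order $8$ at $x$ by construction.
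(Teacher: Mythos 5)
Your proposal is correct, and its logical skeleton agrees with the paper's: in genus $3$ trigonal $=$ nonhyperelliptic $=$ maximal gonality, this is open, $\mathcal{Q}_3(8)$ is irreducible, so a single point $[C,x]\in\mathcal{Q}_3(8)$ with $C$ a smooth plane quartic settles the proposition. Where you genuinely diverge is in how that witness point is produced. The paper writes down explicit equations --- a quartic $C$ and a conic $Q$ in $\mathbb{P}^2$ --- and verifies everything by hand: smoothness via the Jacobi criterion, that $C\cap Q$ is a single point (so the contact there is $8$ by B\'ezout), and that no line has contact $4$ with $C$ at that point, which excludes the abelian locus $\mathcal{H}_3(4)$. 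You instead run an abstract existence argument: fix $(Q,x)$, form the linear system $\mathcal{L}$ of quartics cutting $8x$ on $Q$, and combine Bertini away from the base point with a local computation at $x$. Your route is more conceptual and more flexible (it would solve other prescribed-contact problems with no need to guess equations, and it shows such pairs $(C,x)$ exist for \emph{every} choice of $(Q,x)$), while the paper's is shorter to check line by line and leaves no genericity statements implicit.

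One caveat: your last step, that the hyperflex condition ``is not satisfied identically on $\mathcal{L}$,'' is asserted rather than proved, and it is the only place your argument is incomplete. It does go through by the same local method you use for smoothness. Write a general member as $F=cF_0+Q\tilde G$ with $\tilde G$ a quadric, and let $T$ be the tangent line of $Q$ at $x$; since $(C\cdot Q)_x=8\geq 2$, the tangent line of $C$ at $x$ is necessarily $T$. Restricting, $F|_T=cF_0|_T+Q|_T\cdot \tilde G|_T$, where $Q|_T$ vanishes to order exactly $2$ at $x$, and $F_0|_T$ vanishes to order at least $2$ (indeed $F_0$ is either singular at $x$ or tangent to $Q$ there, for otherwise $F_0|_Q$ would vanish only to order $1$ at $x$). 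Hence the order-$2$ coefficient of $F|_T$ at $x$ is $c\alpha+\beta\tilde G(x)$ with $\beta\neq 0$, which is nonzero for general $\tilde G$; then $(C\cdot T)_x=2$, so $x$ is not even a flex of the general member, in particular $4x\not\sim K_C$ and $[C,x]\in\mathcal{Q}_3(8)$ as required. With this half-line of computation added, your proof is complete.
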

\begin{proof}
	To conclude the proposition, it is enough to exhibit a unique element $[C,x]$  of $\mathcal{Q}_3(8)$ with underlying trigonal curve. For this, we consider the quartic $C = V(x^4-x^2z^2+yz^3-2y^3z)$ and the quadric $Q = V(x^2 -y^2-yz)$ in $\mathbb{P}^2$. We show that $C$ is smooth and they intersect only in the point $[0:0:1]$. Moreover, any line $L$ passing through $[0:0:1]$ intersects $C$ in another point. 
	
	We start by showing that $C$ is smooth. To see that $C$ is smooth at a point $[x:y:z]$ we use the projective Jacobi criterion for smoothness that says $[x:y:z] \in \mathrm{Sing}(C)$ if and only if we have the vanishing of the partial differentials at the point: 
	\[ x^4 -x^2z^2 + yz^3- 2y^3z = 0, \ \ 4x^3-2xz^2=0, \ \ z^3-6y^2z = 0 \ \mathrm{and} \ 3z^2y-2y^3-2zx^2 = 0 \]
	We assume that all four equations are simultaneously satisfied. The second one implies that either $x=0$ or $2x^2 = z^2$. 
	
	If $x= 0$, the other three equations can be rewritten as 
	\[ yz(z^2-2y^2)= 0, \ \ z(z^2-6y^2) = 0 \ \mathrm{and} \ y(3z^2-2y^2) = 0\]
	This system has the unique solution $y=z =0$. It follows there is no singular point of the form $[0:y:z]$ in C.   
	
	If $2x^2 = z^2 \neq 0$, the third equation implies $z^2 = 6y^2$. Substituting $x^2=3y^2$ and $z^2=6y^2$ in the first equation we get 
	\[ 9y^4- 18y^4 +6y^3z-2y^3z = 0\]
	This implies $4z = 9y$ which is obviously not possible. 
	
	We now prove that the point $[0:0:1]$ is the unique point in $C\cap Q$. By substituting $x^2=y^2 +yz$ in the equation of the quartic we get $y^4= 0$. As $x^2 = y^2 +yz$ we get that $x=0$, proving the uniqueness of the point. B\'ezout's theorem implies that the point $[0:0:1]$ has multiplicity 8 in the intersection. Moreover, it is obvious that the intersection $C \cap V(ax-by)$ contains a point different from $[0:0:1]$ for any choice $[a:b] \in \mathbb{P}^1$. 
	
	The adjunction formula implies $\omega_C \cong \mathcal{O}_C(1)$. As a consequence, $[C,[0:0:1]]$ is an element of $\mathcal{Q}_3(8)$ and $C$ is trigonal as the canonical map defines an embedding.  
	\end{proof}

We deduce from Propositions \ref{subcanonical}, \ref{quadsubcan} and \ref{genus3} that Theorem \ref{trmabel} and Theorem \ref{trmquad} are true for the partition of length one. By specialization, we conclude this is true for every nonhyperelliptic component of an abelian or quadratic stratum in the respective genus, hence proving Theorem \ref{trmabel} and Theorem \ref{trmquad}. 

Theorem \ref{unirul} follows as a trivial consequence of Theorem \ref{trmabel} and \cite[Remark 1.4]{BAR18}. 

\begin{rmk} \normalfont 
	The study of the generic element $[C,x]$ in the stratum $\mathcal{H}_g(2g-2)$ of curves with a subcanonical point appears in the literature in \cite{subcanbul}. The main result there describes the Weierstrass gap of the point $x$ for all three components of $\mathcal{H}_g(2g-2)$. Proposition \ref{subcanonical} provides another proof of \cite[Theorem 2.1]{subcanbul}.
\end{rmk} 

Let $Z$ be an irreducible component of $\mathcal{H}^k_g(\mu)$ and consider the forgetful map $\pi_Z\colon Z \rightarrow \mathcal{M}_g$. Corollary 5 in \cite{Bud} implies that a generic fibre of the map $\pi_Z$ has dimension $h^0(C,p_1+\ldots+p_n)-1$ where $[C,p_1,\ldots,p_n]$ is a generic point of $Z$. 

\begin{rmk}
	Theorem \ref{trmabel} and Theorem \ref{trmquad} provide another proof of the fact that $\pi_Z$ has generically finite fibers when $k =1$ or $k=2$, the component $Z$ is nonhyperelliptic and the partition is positive of length $l(\mu) \leq \frac{g+1}{2}$.
\end{rmk}

\section{Strata of $2$-canonical divisors in low genus}

Consider the blow-up of $\mathbb{P}^2$ in $0\leq r\leq 8$ points. The resulting surface $S$ is a del Pezzo surface and the class $|-2K_S|$ is ample. There exists a moduli space $\mathcal{P}_r$ parametrizing such surfaces. Moreover, we can consider the space: 
\[ \mathcal{B}_r = \left\{ (S,C) \ | \ S\in\mathcal{P}_r \ \mathrm{and} \ C \in |-2K_S| \textrm{ \ a smooth and irreducible curve } \right\}\]

The adjunction formula implies the following equality for the genus $g$ of such a curve $C$: 
\[ 2g-2 = C\cdot C + K_S\cdot C  = 18-2r\]
Following the method in \cite{BAR18} we see that to prove uniruledness of an irreducible component $Z$ of $\mathcal{H}^2_g(\mu)$ in the range $3\leq g \leq 6$ it is enough to prove that the underlying curve of a generic element of $Z$ is in the image of the forgetful morphism 
\[ \psi_r\colon\mathcal{B}_r \rightarrow \mathcal{M}_{10-r}\] 

We know from \cite[Proposition 2.1]{BAR18} that $\psi_r$ is dominant when $4\leq r\leq 7$ and that the curves having a plane nodal model of degree 6 are in the image of $\psi_r$.

\begin{prop}
	Let $3\leq g\leq 5$ and $C$ a genus $g$ curve of maximal gonality. Then $C$ is in the image of the map $\psi_{10-g}$. When $g=6$ the image of $\psi_4$ contains all curves $C$ of maximal gonality that are not bi-elliptic and not plane quintics.
\end{prop}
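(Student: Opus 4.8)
The plan is to analyze each genus separately, using the classification of curves of maximal gonality in low genus and the known structure of del Pezzo surfaces. The key observation is that for a del Pezzo surface $S$ obtained by blowing up $\mathbb{P}^2$ in $r = 10-g$ points, a curve $C \in |-2K_S|$ has genus $g$, and the anticanonical embedding gives $C$ a specific projective model. The strategy is to show that every curve of maximal gonality in the relevant genus arises this way by exhibiting it on a suitable blow-up.

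For $g=3$ (so $r=7$), a curve of maximal gonality is a trigonal curve or a smooth plane quartic; since $\psi_r$ is dominant for $4 \leq r \leq 7$ by the cited result, the image is dense, but to capture \emph{all} such curves I would use that on the blow-up of $\mathbb{P}^2$ in $7$ points, the linear system $|-2K_S|$ realizes genus $3$ curves, and a dimension count together with the irreducibility of the moduli of plane quartics shows the image contains the open locus; the trigonal case must be handled via the explicit nodal model remark. For $g=4$ (so $r=6$) and $g=5$ (so $r=5$), the maximal gonality is $3$ and $4$ respectively, and here I would invoke the structure of the canonical model: a general genus $4$ (resp. $5$) curve of maximal gonality lies on a del Pezzo surface because such a curve admits a plane model of controlled degree with nodes, which by the cited part of \cite[Proposition 2.1]{BAR18} is in the image of $\psi_r$. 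The main work is to verify that maximal gonality forces the existence of such a plane nodal model of degree $6$.

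For $g=6$ (so $r=4$), the argument is more delicate because $\psi_4$ is only asserted to be dominant and to contain curves with a plane nodal sextic model. Here I would use Brill--Noether theory: a genus $6$ curve of maximal gonality has gonality $4$, and the relevant projection gives a plane model. The excluded loci---bi-elliptic curves and plane quintics---are precisely the curves that fail to admit the required degree $6$ plane nodal model, so I would show that a genus $6$ curve of maximal gonality which is neither bi-elliptic nor a plane quintic admits a plane nodal sextic, placing it in the image of $\psi_4$. This reduction to the existence of a plane nodal sextic model is the crux of the matter.

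The hard part will be establishing, for each genus, that maximal gonality guarantees the existence of a plane nodal model of the prescribed degree $6$, since this is the hypothesis under which the cited result \cite[Proposition 2.1]{BAR18} places a curve in the image of $\psi_r$. Concretely, one must produce, on a curve $C$ of maximal gonality and genus $g$, a very ample or birationally very ample linear system $g^2_6$ mapping $C$ to a nodal plane sextic; this requires combining the gonality pencil with a suitable line bundle and checking via Riemann--Roch and a genericity argument that the resulting map has only nodes as singularities. The genus $6$ case is the most subtle, as the exceptional bi-elliptic and plane quintic loci show that the existence of such a model genuinely fails there and must be excised, whereas for $3 \leq g \leq 5$ maximal gonality alone suffices.
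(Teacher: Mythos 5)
Your plan has a genuine gap, and it sits exactly where you locate ``the crux'': the claim that maximal gonality forces the existence of a plane sextic model with \emph{only nodes}. For $g=5$ and $g=6$ you never prove this, and it is genuinely problematic. Brill--Noether theory does give a $g^2_6$ ($\rho=2$ for $g=5$, $\rho=0$ for $g=6$), and one can rule out non-birational behaviour (a base point gives a plane quintic model, degree 2 onto a cubic gives a bi-elliptic or hyperelliptic curve, degree 3 onto a conic gives a trigonal curve), but nothing forces the birational image to be nodal: the $\delta$-invariant budget ($5$ nodes' worth for $g=5$, $4$ for $g=6$) can equally be spent on cusps or tacnodes. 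Since the Proposition asserts that \emph{every} curve of maximal gonality (with the stated exclusions when $g=6$) lies in the image of $\psi_{10-g}$, you cannot dispose of the bad singularities by a genericity argument in moduli --- and indeed your own phrasing (``a general genus $4$ (resp.\ $5$) curve'') repeatedly proves a weaker, generic statement than the one required. The paper avoids this difficulty entirely for $g\geq 5$ by constructing the del Pezzo surface directly rather than through a plane model: for $g=5$ the canonical model of a curve of maximal gonality is a complete intersection of three quadrics in $\mathbb{P}^4$, and \cite[Lemma 2.5]{Ide} allows one to choose two of them meeting in a smooth surface $S$; adjunction gives $\omega_S\cong\mathcal{O}_S(-1)$ and $C\in|-2K_S|$. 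For $g=6$ the paper uses Mukai's theorem that such a curve is $G(2,5)\cap H_1\cap\cdots\cap H_4\cap Q$ in $\mathbb{P}^9$, takes $S=G(2,5)\cap H_1\cap\cdots\cap H_4$ (smooth again by \cite[Lemma 2.5]{Ide}, whose hypotheses hold trivially because $|H_i-Q|=\emptyset$), and concludes $\omega_S\cong\mathcal{O}_S(-1)$, $C\in|-2K_S|$. No plane model, nodal or otherwise, is needed in these cases.

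Your $g=3$ argument has the same defect in a different guise: dominance of $\psi_7$ plus a dimension count can only show that a dense open subset of plane quartics lies in the image, not all of them, which is what the statement demands. (Note also that in genus 3 the non-hyperelliptic curves, the trigonal curves, and the smooth plane quartics are all the same class, so there is no separate ``trigonal case.'') The paper treats $g=3$ and $g=4$ uniformly and for all curves at once: it exhibits a degree-6 line bundle with four sections ($\omega_C(x+y)$ for $g=3$, $\omega_C$ for $g=4$), maps $C$ into $\mathbb{P}^3$ as a degree 6 curve, and projects from a general point to obtain a nodal plane sextic, which lies in the image of $\psi_r$ by the part of \cite[Proposition 2.1]{BAR18} you also invoke. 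So your $g=4$ case essentially coincides with the paper's; your $g=3$, $g=5$ and $g=6$ cases do not go through as written, the latter two because they rest on an unproved and nontrivial assertion that the paper deliberately circumvents.
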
 
\begin{proof}
	If a curve can be embedded as a degree 6 curve in $\mathbb{P}^3$, we obtain a plane nodal model of degree 6 by projecting from a generic point of $\mathbb{P}^3$, see \cite[Theorem 3.10]{Hartshorne}. Hence it is enough to provide on $C$ a very ample line bundle $A$ with $h^0(C,A) = 4$ and $\mathrm{deg}(A) = 6$. Consider $x$ and $y$ two generic points of $C$. When $g=3$ we consider $A = \omega_C(x+y)$ and when $g=4$ we simply take $A = \omega_C$. In these cases the Riemann-Roch theorem implies our conclusion.  
	
	If $g=5$ and $C$ is a curve of maximal gonality, it is a classical result in algebraic geometry that its canonical model is the complete intersection of three quadrics $Q_1, Q_2$ and $Q_3$ in $\mathbb{P}^4$. Using Lemma 2.5 in \cite{Ide}, it follows we can choose the quadrics $Q_1$ and $Q_2$ in such a way that $Q_1\cap Q_2$ is a smooth surface which we denote $S$. The adjunction formula for complete intersections imply $\omega_S\cong \mathcal{O}_S(-1)$. In particular, $S$ is a del Pezzo surface as the anticanonical bundle is ample. Moreover $C \in |-2K_S|$ and hence the conclusion follows. 
	
	In genus $g=6$, we know from \cite{Mukgrass} that a curve $C$ of maximal gonality that is not bi-elliptic and not a plane quintic is the intersection in $\mathbb{P}^9$ of the Grassmannian $G(2,5)$ with four hyperplanes $H_1, H_2,H_3, H_4$ and a hyperquadric $Q$. Here the Grassmannian is embedded in $\mathbb{P}^9$ by the Pl\"ucker coordinates. 
	
	In particular, in the variety $X = G(2,5)$, the curve $C$ is the complete intersection of 5 divisors, denoted again $H_1,H_2, H_3, H_4$ and $Q$ accordingly. If we consider the class of the divisor $H_i - Q$ we see it is a negative multiple of an ample class. This follows from the fact that $\mathcal{O}_{\mathbb{P}^9}(Q-H_i)\cong \mathcal{O}_{\mathbb{P}^9}(1)$ and the restriction of an ample class to a subvariety is ample. As a consequence, the linear system $|H_i-Q|$ is empty and the conditions of Lemma 2.5 in \cite{Ide} are trivially satisfied. Hence we can assume that $G(2,5) \cap H_1 \cap\ldots \cap H_4$ is a smooth surface $S$. Using the adjunction formula for the complete intersection $S$ inside $X$ we obtain 
	\[\omega_S \cong \omega_X\otimes \mathcal{O}_S(4)\]
	We know from Proposition 1.9 in \cite{Mukgrass} that $\omega_X \cong \mathcal{O}_X(-5)$ and as a consequence $\omega_S \cong \mathcal{O}_S(-1)$. It follows that $S$ is a del Pezzo surface and $C \in |-2K_S|$, hence the conclusion follows. 

\end{proof} 

\textbf{Proof of Theorem 1.4:} The statement is clear for genus $3\leq g\leq5$. When $g=6$ and $l(\mu) \geq 4$ we want to show that a generic curve in the image of $\pi_\mu\colon \mathcal{Q}_6(\mu) \rightarrow\mathcal{M}_6$ has maximal gonality, is not bi-elliptic and is not a plane quintic. The locus of smooth plane quintics is 12 dimensional while the locus of bi-elliptic curves is 10-dimensional. We know from \cite{Bud} that the image of $\pi_\mu$ has dimension $\mathrm{min}\left\{9+l(\mu), 15\right\}$, which completes the proof.  

\hfill $\square$
\bibliography{main}
\bibliographystyle{alpha}

\end{document}